\documentclass[11pt]{amsart}
\usepackage{amsmath,amssymb,amscd,amsthm,amsxtra}
\usepackage{latexsym}

\usepackage[dvips]{graphicx}        %   OK for UNIX 

\headheight=8pt
\topmargin=0pt
\textheight=624pt
\textwidth=432pt
\oddsidemargin=18pt
\evensidemargin=18pt

\newtheorem{theorem}{Theorem}
\newtheorem{lemma}{Lemma}

%SPACING

%DELIMETERS

%GREEK LETTERS

\newcommand{\e}{\varepsilon}

\newcommand{\La}{\Lambda}

%mathcalLIGRAPHIC

%GENERAL

\newcommand{\p}{\partial}

\newcommand{\wh}{\widehat}

\newcommand{\ti}{\tilde}
\newcommand{\Id}{{\bf 1}}
\newcommand{\bQ}{{\bf Q}}
\newcommand{\bA}{{\bf A}}
\newcommand{\bB}{{\bf B}}
\newcommand{\bC}{{\bf C}}
\newcommand{\bD}{{\bf D}}
\newcommand{\bT}{{\bf T}}

\begin{document}

\title
[A bilinear oscillatory integral along parabolas ]
{A bilinear oscillatory integral along parabolas}

\author{Dashan Fan}
\address{
Dashan Fan\\
Department of Mathematics\\
University of Wisconsin-Milwaukee \& Huazhong Normal University\\
Milwaukee, WI, USA}
\email{fan@uwm.edu}

\author{Xiaochun Li}

\address{
Xiaochun Li\\
Department of Mathematics\\
University of Illinois at Urbana-Champaign\\
Urbana, IL, 61801, USA}

\email{xcli@math.uiuc.edu}

\date{\today}
\subjclass{Primary  42B20, 42B25. Secondary 46B70, 47B38.}
\thanks{ The first author was partially supported by NSF grant of China.
The second author was supported by NSF grant DMS-0456976}

\keywords{bilinear operator, oscillatory integral}

\begin{abstract}
We establish an $L^\infty\times L^2\rightarrow L^2$ norm estimate for a bilinear oscillatory integral operator along parabolas
incorporating oscillatory factors $e^{i|t|^{-\beta}}$.
\end{abstract}

\maketitle

\section{Introduction}

It is well-known that the Hilbert transform along curves:
$$ H_{\nu }f(x)=p.v.\int_{-1}^{1}f(x-\nu (t))\frac{dt}{t} $$
is bounded on $L^{p}({\mathbb {R}}^{n})$ for $1<p<\infty $, where $\nu (t)$ is an
appropriate curve in ${\mathbb R}^{n}$. 
Among various curves, one simple model case is the parabola $(t,t^{2})$ in the two
dimensional plane. This work was initiated by Fabes and Riviere \cite{FR} in
order to study the regularity of parabolic differential equations.
A nice survey \cite{SW} on this type of operators was written by Stein and Wainger. 
A lot of work on the Hilbert transform along curves 
had been done in the last thirty years by many people. Readers can find some of them in
\cite{Christ1, Christ2, DRubio, NVWW}.
The general results were established in \cite{CNSW} for the singular Radon transforms and 
their maximal analogues over smooth submanifolds of ${\mathbb{R}}^n$ with some curvature 
conditions.\\

The cancellation condition of $p.v.\frac{1}{t}$ plays an important role 
for obtaining $L^p$ boundedness of the Hilbert transform.  However,
this condition is not necessary if there is an oscillatory factor $e^{i
|t|^{-\beta }}$($\beta >0$) in the kernel (see \cite{W, FS, H}). Due to the high oscillation
of the factor $e^{i|t|^{-\beta}}$, $L^p$ estimates can be obtained for 
 corresponding operators with the kernel $ e^{i|t|^{-\beta}}/|t|$.
In \cite{Z}, Zielinski studied the following  oscillatory integral
$$
T_{\alpha ,\beta, \nu}(f)(x)=\int_{0}^{1}f(x-\nu (t))\,e^{it^{-\beta }}%
\frac{dt}{t^{1+\alpha }},\,\,\,\,\,{\rm with}\,\, \nu (t)=(t,t^{2}).
$$
He proved that $T_{\alpha ,\beta ,\nu }$ is bounded on $L^{2}(R^{2})$ if
and only if $\beta \geq 3\alpha $. Chandrana \cite{C} obtained
 $L^{2}({\mathbb R}^{2})$ boundedness of $T_{\alpha ,\beta ,\nu }$ for 
curves $\nu (t)=(t,t^{k}), k>1$. Recently, this result was
extended to high dimensions for curves $\nu
(t)=(t^{k_{1}},t^{k_{2}},..., t^{k_{n}})$ with $0<k_{1}<k_{2}<...<k_{n}$
(see \cite{CFWZ}).\\

In this article, we are interested in the
bilinear oscillatory integral along a parabola,

\begin{equation}\label{defofTbeta}
T_{\beta }(f, g)(x)=\int_{-1}^{1}f(x-t)\;g(x-t^{2})\,e^{i |t| ^{-\beta }}
\frac{dt}{|t| } \,, \,\,{\rm where}\,\, \beta>0\,.
\end{equation}

The main theorem that we prove is the following. 

\begin{theorem}\label{thm1}
If $\beta >1$, then the operator $T_\beta$ is bounded from $L^{\infty}\times L^2 $ to $L^2$, that is, 
\begin{equation}\label{022}
\big\|T_\beta(f, g)\big\|_2\leq C\|f\|_\infty\|g\|_2\,
\end{equation}  
for all $f\in L^\infty$ and $g\in L^2$. 
\end{theorem}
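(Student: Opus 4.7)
Let $\psi \in C_c^\infty((\tfrac12, 2))$ be a smooth bump with $\sum_{k \le 0} \psi(|t|/2^k) \equiv 1$ on $(0, 1]$, and decompose $T_\beta = \sum_{k \le 0} T_k$, with $T_k$ the restriction of the integral defining $T_\beta$ to the dyadic shell $|t| \sim 2^k$. The uniform bound $\|T_k(f, g)\|_2 \lesssim \|f\|_\infty \|g\|_2$ is immediate: after discarding the oscillatory factor and making the change of variable $s = t^2$ on each half $t > 0$ and $t < 0$, one finds $|T_k(f, g)(x)| \lesssim \|f\|_\infty Mg(x)$, so the $L^2$ boundedness of the Hardy-Littlewood maximal operator suffices. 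The heart of the argument is the geometric improvement $\|T_k(f, g)\|_2 \le C\,2^{\epsilon k}\|f\|_\infty \|g\|_2$ for some $\epsilon = \epsilon(\beta) > 0$ that is positive exactly when $\beta > 1$; summing over $k \le 0$ then yields the theorem.

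To retain $L^\infty$ on $f$ while using the oscillation, I would dualize against $h \in L^2$. The change of variable $y = x - t$ in the $x$-integral converts the trilinear form into $\langle T_k(f, g), h\rangle = \int f(y)\,\tilde T_k(g, h)(y)\,dy$, where
\[\tilde T_k(g, h)(y) = \int g(y + t - t^2)\,\overline{h(y + t)}\,e^{i|t|^{-\beta}}\,\frac{\psi(|t|/2^k)}{|t|}\,dt,\]
so the goal becomes the bilinear bound $\|\tilde T_k(g, h)\|_1 \le C\,2^{\epsilon k}\|g\|_2 \|h\|_2$. Because $g, h \in L^2$, Fourier inversion yields the bilinear-multiplier representation
\[\tilde T_k(g, h)(y) = \iint \hat g(\xi_1)\,\overline{\hat h(\xi_2)}\,M_k(\xi_1, \xi_2)\,e^{2\pi i y(\xi_1 - \xi_2)}\,d\xi_1\,d\xi_2,\]
with oscillatory symbol
\[M_k(\xi_1, \xi_2) = \int e^{i\Phi(t)}\,\frac{\psi(|t|/2^k)}{|t|}\,dt,\qquad \Phi(t) = |t|^{-\beta} + 2\pi\bigl((\xi_1 - \xi_2)t - \xi_1 t^2\bigr).\]

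The analytic core is a stationary-phase analysis of $M_k$. Since $|\Phi''(t)| = |\beta(\beta+1)|t|^{-\beta-2} - 4\pi\xi_1| \gtrsim 2^{-k(\beta+2)}$ on $|t| \sim 2^k$ (away from a thin exceptional region in $\xi_1$), van der Corput's second-derivative test gives the pointwise bound $|M_k(\xi_1, \xi_2)| \lesssim 2^{k\beta/2}$, while the stationary-phase condition $\Phi'(t_c) = 0$ identifies the essential support of $M_k$ as a thin neighbourhood of the curve $(\xi_1 - \xi_2) - 2\xi_1 t_c = \beta|t_c|^{-\beta - 1}\operatorname{sgn}(t_c)/(2\pi)$, of total measure $\sim 2^{-k(\beta+1)}$; iterated integration by parts produces arbitrarily fast decay off this neighbourhood. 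Combining the pointwise symbol estimate with a Littlewood-Paley decomposition of $g$ and $h$ matched to the essential support of $M_k$, together with the localization of $g$ and $h$ to unit intervals (under which $\tilde T_k(g, h)$ is supported in a $y$-set of bounded measure, so that $\|\tilde T_k\|_1 \lesssim \|\tilde T_k\|_2$), and Cauchy-Schwarz on the Fourier side, yields the key bound $\|\tilde T_k(g, h)\|_1 \lesssim 2^{k(\beta-1)/2}\|g\|_2\|h\|_2$. The geometric series $\sum_{k \le 0} 2^{k(\beta-1)/2}$ converges exactly when $\beta > 1$, which is the hypothesis in the theorem.

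\textbf{Main obstacle.} The principal subtlety is reconciling the pointwise van der Corput bound $|M_k| \lesssim 2^{k\beta/2}$ on the stationary curve with the Plancherel identity $\int|M_k(\xi_1, \xi_2)|^2\,d\xi_2 \sim 2^{-k}$, which by itself only recovers the uniform estimate. Neither ingredient alone is strong enough; the summable gain $2^{k(\beta-1)/2}$ emerges only after one recognizes that $M_k$ concentrates on a one-dimensional curve of measure $\sim 2^{-k(\beta+1)}$ in the $(\xi_1, \xi_2)$-plane, and orchestrates the Littlewood-Paley pieces of $g$ and $h$ so that their frequency supports meet this curve with sufficient orthogonality. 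Executing this bookkeeping, together with the careful treatment of the transition regions where several phase terms are balanced, is the principal technical burden.
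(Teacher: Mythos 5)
The proposal identifies the right objects — dyadic decomposition in $t$, the oscillatory bilinear symbol $M_k$, the van der Corput bound $|M_k|\lesssim 2^{k\beta/2}$, the Plancherel identity $\int |M_k(\xi_1,\cdot)|^2 \sim 2^{-k}$, and the threshold $\beta>1$ — and the dualization to a bilinear form in $(g,h)$ that absorbs $\|f\|_\infty$ is a sensible move, close in spirit to the paper's reduction to trilinear forms. But the decisive estimate $\|\tilde T_k(g,h)\|_1\lesssim 2^{k(\beta-1)/2}\|g\|_2\|h\|_2$ is not established, and I do not think the mechanism you sketch (essential-support geometry, Littlewood--Paley matched to the stationary curve, Cauchy--Schwarz on the Fourier side) can deliver it. Applying Plancherel and Cauchy--Schwarz in $\xi_1$ to $\widehat{\tilde T_k(g,h)}$ and using either the sup bound $2^{k\beta/2}$ or the $L^2(\xi_2)$ bound $2^{-k/2}$ gives only $\|\tilde T_k(g,h)\|_2\lesssim 2^{-k/2}\|g\|_2\|h\|_2$, which grows as $k\to-\infty$ and is worse than the trivial uniform bound; the "thin essential support" does not improve this because on that support the pointwise and Plancherel bounds are exactly consistent ($2^{-k(\beta+1)}\cdot 2^{k\beta}=2^{-k}$). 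You flag this tension explicitly, but resolving it is precisely the hard part and it is left open.

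There is a second, structural problem. Your plan proves the stronger statement that the single dyadic piece decays, $\|T_k(f,g)\|_2\lesssim 2^{\epsilon k}\|f\|_\infty\|g\|_2$. The paper does \emph{not} prove such uniform decay, and in fact poses the analogous decay (their Question~1, i.e.\ inequality~(\ref{estTj}) for some $p,q>1$) as an open problem. Their actual argument splits the symbol in the $(\xi,\eta)$-plane into eight regions $\Gamma_1,\dots,\Gamma_8$. The region $\Gamma_1$ (the $f$-frequency below the critical scale $2^{(\beta+1)j}$, the $g$-frequency at the critical scale $2^{(\beta+2)j}$) contains exactly the degenerate stationary-phase regime where $\Phi''(t)=\beta(\beta+1)|t|^{-\beta-2}-4\pi\xi_1$ can vanish on the shell $|t|\sim 2^k$ — the "thin exceptional region in $\xi_1$" that you set aside. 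For this piece the paper does not establish any decay in $j$ at all; instead they observe that the pieces $h_{j,m'}$ are genuine Littlewood--Paley blocks in $j$ and sum the whole series $\sum_j T_{j,\beta,1}$ by Cauchy--Schwarz plus the Littlewood--Paley theorem, with no gain from oscillation. For the remaining regions $\Gamma_4,\Gamma_7,\Gamma_8$, where a nondegenerate critical point occurs, they run a $TT^*$ argument on the resulting trilinear oscillatory form, prove nondegeneracy of the Hessian of the phase difference $\tilde{\mathbf Q}_\tau(u,v)=\mathbf Q(u,v)-\mathbf Q(u-\tau,v+b_2\tau)$ (Lemmas~\ref{4Hessian2}, \ref{4nondeg}), and invoke H\"ormander's theorem to get decay $\sim 2^{-\beta j/2}\tau^{-1/2}$ in the diagonal variable; interpolation with a trivial $L^1\times L^2\times L^2$ bound then produces the $2^{-\epsilon_0 j}$ decay, which requires $\beta>1$. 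This Hessian/H\"ormander step is the engine that replaces your unnamed "orchestration", and it is not a routine consequence of Cauchy--Schwarz on the symbol. In short, the proposal is a genuinely different route, but its two load-bearing steps — uniform $j$-decay including the degenerate region, and the bilinear gain past Plancherel — are not supplied, and both are points where the paper either works hard or deliberately avoids the issue.
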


We can view this result as a bilinear version of the operator $T_{\alpha
,\beta, \nu } $ in the case $\alpha$ = 0. 
It was observed that all proofs of
the $L^{2}$ boundedness mentioned in \cite{C, CFWZ, Z} were mainly based on
Plancherel's Theorem. 
However, our proof is much more difficult than those in the linear case.
Moreover, our method can also be used to handle 
the kernel $1/|t|^{1+\alpha}$ with stronger singularity.  With a little more technical 
modification, this method also works for the operator along a polynomial curve if
one replaces $t^2$ by a polynomial $P(t)$.   For simplicity, we only concentrate on the $t^2$ case here.
We do not know yet what is the best lower bound for $\beta$. A natural guess
would be $0$, however, the method in this paper does not give any 
lower bound better than $1$.
%Since there is the highly oscillatory factor $|t|^{-\beta}$, it is possible 
%to increase the singularity of the Calder\'on-Zygmund kernel $1/t$. The following method %
%can be used to solve this type of problems. 
By the time-frequency analysis, it is possible to get
$L^p\times L^q\rightarrow L^r$ estimates for $T_\beta$ 
for all $p, q>1$ and $1/r=1/p+1/q$. 
The more general curve cases and $L^r$ estimates will appear 
in subsequent papers.  A more interesting problem 
is the following, which yields $L^r$ estimates immediately. \\

\noindent
{\bf Question 1.}
Let $\rho$ be a standard bump function supported on the interval $[1/8, 1/2]$ or $[-1/2, -1/8]$.  And let $\beta > 0$,
$j\geq 1$, and 
$T_{j,\beta}(f,g)$ be defined by
$$
T_{j,\beta}(f,g)(x)=\int f(x-t)g(x-t^2)e^{i|t|^{-\beta}}2^j\rho(2^jt)dt\,.
$$
Are there positive constants 
$C$ and $\e$ independent of 
$f, g$ and $j$ such that 
\begin{equation}\label{estTj}
\|T_{j,\beta}(f,g)\|_{r}\leq C2^{-\e j}\|f\|_p\|g\|_q\,
\end{equation} 
holds for some $p>1,q>1$ and $ 1/p+1/q=1/r$?\\

When $\beta >0$, $e^{i|t|^{-\beta}}$ is a highly oscillatory factor as $t\rightarrow 0$, 
%$\beta >0$ plays an important role in our problem due to the highly oscillatory factor $e^{i|t|^{-\beta}}$. 
Thus it is natural to ask this kind of question, and seems very possible to 
get an affirmative answer to this question if $\beta$ is large enough,
say $\beta> 6$. There are two possible ways to solve this question. 
One of them is to ask whether there exists a positive number $\e$ such that
$$
\bigg|\int\int f_1(x) f_2(y) e^{\lambda u(x, y)}\chi(x,y) dx dy\big|\leq C
 (1+|\lambda|)^{-\e} \|f_1\|_2\|f_2\|_2
$$
holds for all $f_1, f_2\in L^2$ if $ |L(u) |\geq C$, where
$L$ is a suitable differential operator and $\chi$ is a suitable 
bump function on a bounded set. However this seems to be a quite 
challenging way.  A lot of work 
had been done for $L=\p^m\p^n/\p x^m\p y^n$. For example, some of this
type of work can be found in {\cite{CCW}} and {\cite{PSt1}}. A more promising way is to consider 
the phase function by a delicate analysis on the stationary phase. 
The main difficulty seems to be the stability of the critical points of 
the phase function $ a\xi t+b\eta t^2+f(t)$ for some $a, b\in \mathbb R$ and $C^\infty$ function
$f$, when the second order derivative of the phase function can be very small. 
By stability, we mean that some properties of the critical points can not
be destroyed when there is a perturbation of the variables $(\xi, \eta)$.
The desired stability can be obtained when the second order derivative of the phase
function is large, which is one of the crucial points in this paper.  
A further investigation on the stability of the critical points will be carried out.\\

Following the work of Lacey and Thiele, \cite{LT}, the field of multi-linear 
operators has been actively developed, to the point that some of the most 
interesting open questions have a strong connection to some kind of non-abelian 
analysis.  For instance, the tri-linear Hilbert transform 
\begin{equation*}
\int f_1 (x+y) f_2 (x+2y) f _3 (x+3y) \frac {dy} y 
\end{equation*}
has a hidden quadratic modulation symmetry which must be accounted for in any 
proposed method of analysis.  This non-abelian character is explicit in the 
work of B.~Kra and B.~Host \cite{HK} who characterize the 
characteristic factor of the corresponding ergodic averages 
\begin{equation*}
N ^{-1} \sum _{n=1} ^{N} f_1 (T ^{n}) f_2 (T ^{2n}) f_3 (T ^{3n}) 
\longrightarrow \prod _{j=1} ^{3} \mathbb E (f_j \mid \mathcal N)
\end{equation*}
Here, $ (X, \mathcal A, \mu ,T )$ is a measure preserving system, $ \mathcal N\subset
\mathcal A$ is the sigma-field which describes the characteristic factor.  In this case, 
it arises from certain $ 2$-step nilpotent groups.  The limit above is in the sense 
of $ L ^{2}$-norm convergence, and holds for all bounded $ f_1,f_2,f_3$. 

The ergodic analog of  the bilinear Hilbert transform along a parabola is 
\begin{equation*}
N ^{-1} \sum _{n=1} ^{N} f_1 (T ^{n}) f_2 (T ^{n ^2 })
\longrightarrow \prod _{j=1} ^{2} \mathbb E (f_j \mid \mathcal K _{\textup{profinite}})
\end{equation*}
where $ \mathcal K _{\textup{profinite}}\subset \mathcal A$ is the profinite factor, 
a subgroup of the maximal abelian factor of $ (X, \mathcal A, \mu ,T)$.  This last point 
suggests that Fourier analysis might be able to successfully analysize the bilinear 
Hilbert transform along parabola.  However, the proof  of the characteristic factor result  
above, due to Furstenberg \cite{Fur}, utilizes the characteristic factor for the 
three-term result.  (We are indebted to M. Lacey for bringing Furstenberg's theorems to our attention.)
This suggests that  the bilinear Hilbert transform along parabolas seems to be a
result at the very edge of what might be understood by Fourier analytic techniques.   
Perhaps  time-frequency analysis should be combined with 
estimates for the tri-linear oscillatory integrals (Lemma \ref{osc4bj}) studied in this paper. \\
%(Cite Lemma 3?) 
%}

\noindent
{\bf Acknowledgement} The second author would like to thank 
his wife, Helen,  and his son, Justin, for being together through
the hard times in the past two years. And he is also very grateful to 
Michael Lacey for his constant support and encouragement. The authors would
like to express their gratitude to  the very skillful and generous referee
for his many valuable comments and suggestions, which resulted in 
a great improvement in presentation of this paper.

\section{A Reduction}\label{secRe}
\setcounter{equation}0

In this section, we first show that Theorem \ref{thm1} can be reduced to Theorem \ref{thm3}. 

\begin{theorem}\label{thm3}
Let $\rho$ be a standard bump function supported on the interval $[1/8, 1/2]$ or $[-1/2, -1/8]$.  And let 
$\beta>1$, $j\geq 1$ and 
$T_{j, \beta}(f,g)$ be defined by
$$
T_{j, \beta}(f,g)(x)=\int f(x-t)g(x-t^2)e^{i|t|^{-\beta}}2^j\rho(2^jt)dt\,.
$$
Then 
\begin{equation}\label{estTj}
\big\|\sum_{j=1}^{\infty}T_{j, \beta}(f,g)\big\|_{2}\leq C\|f\|_\infty\|g\|_2\,
\end{equation} 
holds for all $f\in L^\infty$ and $g\in L^2$.
\end{theorem}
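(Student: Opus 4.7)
The plan is to prove Theorem~\ref{thm3} by a Cotlar--Stein almost-orthogonality argument applied to the linear operators $U_j : L^2 \to L^2$ defined, for fixed $f \in L^\infty$, by $U_j g := T_{j,\beta}(f,g)$. First, the pointwise estimate $|U_j g(x)| \leq \|f\|_\infty \int |g(x-t^2)| 2^j|\rho(2^jt)|\,dt$, the substitution $u = t^2$, and Minkowski's inequality give the uniform bound $\|U_j\|_{L^2\to L^2} \leq C\|f\|_\infty$ for every $j \geq 1$. To sum in $j$ I would establish the almost-orthogonality estimate
\[
\|U_j U_k^*\|_{L^2\to L^2} + \|U_j^* U_k\|_{L^2\to L^2} \leq C\|f\|_\infty^2 \, 2^{-\e|j-k|}
\]
for some $\e>0$, and then invoke Cotlar--Stein; since $\sum_n 2^{-\e n/2}$ converges, this yields $\|\sum_j U_j\|_{L^2\to L^2} \leq C\|f\|_\infty$, which is \eqref{estTj}.

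To estimate $\|U_j U_k^*\|$, assume without loss that $k > j$ and compute the kernel of $U_j U_k^*$ via the substitution $y = x - t^2 + s^2$, which turns $t = \sqrt{x-y+s^2}$ into a function of $(x,y,s)$. The kernel becomes a one-dimensional oscillatory integral in $s$,
\[
K_{j,k}(x,y) = \int \frac{f(x-t(s))\,\bar f(y-s)}{2\,t(s)}\, e^{i(|t(s)|^{-\beta} - |s|^{-\beta})} \, 2^j\rho(2^jt(s))\,2^k\rho(2^ks)\, ds,
\]
supported where $|x-y|$ is of order $2^{-2j}$. Its phase $\phi(s) = |t(s)|^{-\beta} - |s|^{-\beta}$ has derivative
\[
\phi'(s) = \beta\bigl(s^{-\beta-1} - s\,t(s)^{-\beta-2}\bigr),
\]
and when $k$ is much larger than $j$ the first term dominates and $|\phi'(s)|$ is of order $2^{k(\beta+1)}$. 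Integration by parts in $s$, carefully tracking the $s$-derivatives of the amplitude (which come both from $\rho(2^ks)$ and from the chain rule through $t(s)$), yields a pointwise bound on $K_{j,k}$, and Schur's test then produces the claimed operator-norm decay. The kernel of $U_j^* U_k$ is handled by the symmetric change of variable, with the roles of $t$ and $s$ interchanged.

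The main obstacle is the regime where $k$ and $j$ are comparable. In that range $\phi'(s)$ acquires a stationary point at $s = t(s)$ (equivalently $x = y$), and $|\phi''|$ itself can be small, so plain integration by parts fails; instead one must use van der Corput and stationary-phase estimates together with a careful study of how the critical points of $\phi$ move under perturbations of $(x,y)$ --- precisely the stability issue emphasized in the introduction. Compounding the difficulty, $f \in L^\infty$ offers no regularity, so neither $f(x-t(s))$ nor $\bar f(y-s)$ can ever be differentiated in an integration by parts; every bit of cancellation must be extracted from the pure phase $e^{i|t|^{-\beta}}$. Expanding the pairing $\langle U_j g, U_k g\rangle$ and exposing this cancellation leads to a trilinear oscillatory integral whose control is exactly the content of Lemma~\ref{osc4bj}, and the hypothesis $\beta>1$ is what makes that lemma produce honest geometric decay in $|j-k|$, so that Cotlar--Stein applies and the series $\sum_j U_j$ converges on $L^2$.
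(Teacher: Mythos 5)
Your proposal is based on a Cotlar--Stein almost-orthogonality argument applied to $U_j g := T_{j,\beta}(f,g)$ with $f$ frozen, which is a genuinely different structure from the paper's argument; but as written it has a gap that I do not see how to close along the lines you sketch. The uniform bound $\|U_j\|_{L^2\to L^2}\lesssim\|f\|_\infty$ is fine. The problem is with the estimate of $\|U_jU_k^*\|$. The kernel you compute,
\[
K_{j,k}(x,y) = \int \frac{f(x-t(s))\,\overline{f(y-s)}}{2\,t(s)}\, e^{i(|t(s)|^{-\beta} - |s|^{-\beta})} \, 2^j\rho(2^jt(s))\,2^k\rho(2^ks)\, ds,
\]
contains the factors $f(x-t(s))$ and $\overline{f(y-s)}$, and $f$ is only in $L^\infty$. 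Your second paragraph proposes to integrate by parts in $s$ and to ``carefully track the $s$-derivatives of the amplitude,'' but two of those factors in the amplitude are not differentiable. Any integration by parts of $\partial_s$ against $e^{i\phi(s)}$ produces terms with $f'(x-t(s))t'(s)$ and $\overline{f'}(y-s)$, which are not controlled by $\|f\|_\infty$. Your third paragraph correctly observes that ``neither $f(x-t(s))$ nor $\bar f(y-s)$ can ever be differentiated,'' which directly contradicts the integration-by-parts plan of the second paragraph; the appeal to Lemma~\ref{osc4bj} does not repair this, since that lemma gives an $L^4\times L^2\times L^2$ bound for a very specific trilinear form $\ti\La_{j,\beta,m,4}$ that only appears after a Littlewood--Paley decomposition of the bilinear \emph{symbol}, not for the quadrilinear pairing $\langle U_jg,U_kg\rangle$ that your $TT^*$ expansion actually produces. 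So the almost-orthogonality bound $\|U_jU_k^*\|\lesssim 2^{-\e|j-k|}\|f\|_\infty^2$ is left unproved, even in the supposedly easy regime $k\gg j$.

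The paper's route is designed precisely to avoid this obstacle. Instead of working with the kernel of $U_jU_k^*$ (where $f$ sits inside the integral), the paper passes to the bilinear symbol
\[
m_{j,\beta}(\xi,\eta) =\int \rho(t)e^{-i(2^{-j}\xi t + 2^{-2j}\eta t^2 - 2^{\beta j} |t|^{-\beta})}\, dt,
\]
which is a smooth function of $(\xi,\eta)$, and performs all of the integration by parts and stationary-phase analysis on $m_{j,\beta}$ rather than on $f$. Concretely, $T_{j,\beta}$ is broken into $T_{m,m',j,\beta}$ by a Littlewood--Paley decomposition in both $\xi$ and $\eta$ (Section~\ref{para1}), the indices $(m,m')$ are grouped into eight regions $\Gamma_1,\dots,\Gamma_8$, and each is treated by a method tailored to the behavior of the phase: for $\Gamma_1$ there is no decay in $j$ and the sum over $j$ is handled as a paraproduct via Littlewood--Paley square functions; for $\Gamma_2,\Gamma_3,\Gamma_5,\Gamma_6$ the phase has no critical point and Fourier-series expansion of the smooth symbol gives $O(2^{-M\beta j})$; for $\Gamma_4,\Gamma_7,\Gamma_8$ the phase has a critical point and the authors prove nondegeneracy of an associated Hessian (Lemmas~\ref{4Hessian2}, \ref{4nondeg}) to obtain decay via H\"ormander--Phong--Stein bounds for the trilinear oscillatory forms. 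Finally, passing from the resulting $L^2\times L^2\times L^2$ trilinear estimate to the endpoint $L^\infty\times L^2\to L^2$ requires its own localization argument (the $\Id_n^*$ partition and the sum over $(k_1,k_2,n)$ in Section~\ref{T4}), which has no analogue in your sketch. If you wish to salvage the Cotlar--Stein framework, you would need to first perform a frequency decomposition so that the operators being tested for almost-orthogonality have smooth multipliers; once that is done you are essentially back to the paper's decomposition, and Cotlar--Stein is no longer needed because each piece already decays geometrically in $j$.
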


Recall that 
$\rho$ is a suitable standard bump function supported on the interval $[1/8, 1/2]$
or $[-1/2,-1/8]$. Let $\ti\rho(t)=\rho(t)+\rho(-t)$. 
For $|t|\leq 1$, we write 
$$
\frac{1}{|t|} =C \sum_{j=1}^{\infty}2^{j}\ti\rho(2^jt) + K_0(t),
$$
where $C$is a (unimportant) constant and $K_0(t)$ is a  bounded function supported on $1/4<|t|<1$. 
%Theorem \ref{thm1} is a consequence of the following two theorems.
Then clearly Theorem \ref{thm1} is a consequence of 
Theorem \ref{thm3} and the following theorem.

\begin{theorem}\label{thm2}
Let $T(f,g)(x)= \int_{1/4<|t|<1}|f(x-t)g(x-t^2)|dt$. Then $T$ is bounded from 
$L^{p}\times L^q $ to $L^r$ for all $1<p, q \leq \infty$ and $ 1/p+1/q=1/r$. 
\end{theorem}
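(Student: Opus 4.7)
On the integration domain $\{1/4 < |t| < 1\}$ the integrand $|f(x-t) g(x-t^2)|$ carries no singular factor and is integrated against Lebesgue measure on a set of finite measure. Thus $T$ is morally a bilinear convolution with a bounded, compactly supported kernel, and no cancellation is needed. My plan is to use Minkowski plus H\"older in the Banach range $r \geq 1$ (which already covers the case $L^\infty \times L^2 \to L^2$ needed for Theorem \ref{thm1} through the reduction in Section \ref{secRe}), and to invoke bilinear interpolation in the quasi-Banach range $r < 1$.

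\textbf{Proof in the main range $r \geq 1$.} Apply Minkowski's integral inequality, valid since $r \geq 1$:
\[
\|T(f, g)\|_r \leq \int_{1/4 < |t| < 1} \|f(\cdot - t) \, g(\cdot - t^2)\|_r \, dt.
\]
For each fixed $t$, H\"older's inequality with exponents $p/r, q/r \geq 1$ (legal because $1/p + 1/q = 1/r$ and $r \geq 1$ force $p, q \geq r$), together with translation invariance of the Lebesgue norm, gives $\|f(\cdot - t) g(\cdot - t^2)\|_r \leq \|f\|_p \|g\|_q$. Integrating over the bounded interval in $t$ yields $\|T(f, g)\|_r \leq C \|f\|_p \|g\|_q$.

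\textbf{Extension to $r < 1$ and the main obstacle.} Minkowski's inequality fails in the quasi-Banach space $L^r$, so I would fill in this region by bilinear interpolation. The corner bounds $L^\infty \times L^\infty \to L^\infty$, $L^\infty \times L^1 \to L^1$, and $L^1 \times L^\infty \to L^1$ each reduce via Fubini to translation invariance of a single Lebesgue norm (after the bi-Lipschitz substitution $s = t^2$, which is harmless since $|t| > 1/4$). Combined with the full Banach face $r \geq 1$ already obtained, bilinear interpolation for positive operators (for instance a complex method adapted to quasi-Banach targets, as in Calder\'on or Janson) covers the remaining region $1/p + 1/q > 1$. The only genuine subtlety is articulating this interpolation step with a quasi-Banach target; the positivity of $T$ and the literally bounded, compactly supported kernel make it routine, and in any case this delicate range is not on the critical path for Theorem \ref{thm1}, for which only the $r = 2$ endpoint is used.
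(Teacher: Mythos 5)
For $r\geq 1$ your Minkowski--H\"older argument is correct, and it is in fact a genuine simplification of what the paper does: since the kernel is bounded and the $t$-domain has finite measure, $\|T(f,g)\|_r\le\int_{1/4<|t|<1}\|f(\cdot-t)g(\cdot-t^2)\|_r\,dt\le C\|f\|_p\|g\|_q$ whenever $p,q\ge r\ge 1$, with no decomposition of the $t$-interval required. This already supplies the $L^\infty\times L^2\to L^2$ endpoint that is all Theorem~\ref{thm1} actually uses. The paper instead handles the entire range $r>1/2$ at once by a Whitney decomposition around $t=1/2$, the point where the Jacobian of $(x,t)\mapsto(x-t,x-t^2)$ vanishes, so on the Banach face you and the paper take quite different routes, with yours the shorter one.

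The extension to $r<1$, however, contains a genuine gap, and it is not a question of ``articulating the interpolation step'' carefully. Interpolation (complex, real, or any form adapted to positive bilinear maps) can only reproduce exponent triples in the convex hull of those already established. The three corner bounds $(0,0,0)$, $(1,0,1)$, $(0,1,1)$ together with the whole Banach face $r\ge1$ have convex hull exactly $\{1/p+1/q\le 1\}$; no interpolation theorem lets you cross $r=1$ without a new estimate on the other side. Positivity and a bounded compactly supported kernel are not enough: consider $B(f,g)(x)=\int_0^1|f(x-t)g(x-t)|\,dt$. Your Minkowski--H\"older argument gives $B:L^p\times L^q\to L^r$ for every $r\ge1$ with $1/p+1/q=1/r$, and all three corner bounds hold trivially; yet with $f=g=\varepsilon^{-1}\mathbf 1_{[0,\varepsilon]}$ one has $B(f,g)\approx\varepsilon^{-1}\mathbf 1_{[0,1]}$, so $\|B(f,g)\|_{3/4}\sim\varepsilon^{-1}$ while $\|f\|_{3/2}\|g\|_{3/2}\sim\varepsilon^{-2/3}$, and $L^{3/2}\times L^{3/2}\to L^{3/4}$ fails. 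The obstruction is exactly the degeneracy of the change of variables: for $B$ the Jacobian of $(x,t)\mapsto(x-t,x-t)$ vanishes identically, while for the $T$ of Theorem~\ref{thm2} it vanishes only at $t=1/2$. The paper's Whitney decomposition around $t=1/2$, together with the localized Cauchy--Schwarz estimate at the $(1,1,2)$ vertex --- which exploits that each piece $T_N$ has support of length $\lesssim 2^{-j}$ --- is precisely what supplies the missing sub-$L^1$ input. So your argument proves Theorem~\ref{thm2} for $r\ge1$ (and in particular suffices for Theorem~\ref{thm1}), but does not prove it in the full stated range $r>1/2$.
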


\begin{proof}
%Actually, we only need to prove $L^\infty\times L^2\rightarrow L^2$ estimate for $T$
%in order to obtain our main theorem.
%It is easy to see that this estimate is trivial. 
%Moreover, it is not difficult to get 
%$L^p\times L^q\rightarrow L^r$ estimates for all $p, q>1$. 
%Theorem \ref{thm2} is not difficult to obtain.
The only bad (singular) point in $1/4\leq |t|\leq 1$ is $t=1/2$. We will decompose 
$1/4\leq |t|\leq 1$ into a union of intervals such that the distance between $1/2$ 
and each interval in the union is comparable to the length of the interval. This is 
essentially the Whitney decomposition. Then we should show that there is a desired 
decay estimate for the corresponding integral over each interval in the previous 
decomposition. These decay estimates allow us to sum all intervals together.    \\

Indeed, 
we may without loss of generality restrict $x$, hence likewise the supports
of $f, g$, to fixed bounded intervals. This is possible because of the restriction
$|t|\leq 1$ in the integral. The trouble happens at a neighborhood of $t=1/2$ since 
the Jocobian $\frac{\partial (u,v)}{\partial (x,t)}=1-2t$ if $u=x-t$ and $v=x-t^2$.
We only prove the bounds for the integral operators with $1/2<|t|<1$ since another part 
$1/4<|t|<1/2$ can be handled similarly. 
Let $\psi$ be a standard bump function supported in $[-100, 100]$.
By changing variables, we only need to show that
\begin{equation}\label{estlem}
\int \bigg|\int_{0<|t|<1/2} f(x-t)g(x-t-t^2+1/4)dt \bigg|^r\psi(x)dx 
\leq C\|f\|_{p}^r\|g\|_q^r\,,
\end{equation} 
for $p>1,q>1$ and $r>1/2$ with $ 1/p+1/q=1/r$. 
Let $\varphi$ be a suitable standard bump function supported in $1/8<|t|<1/2$. It suffices to 
prove that there is a positive $\e$
\begin{equation}\label{estlem1}
\int \bigg|\int f(x-t)g(x-t-t^2)\varphi(2^jt)dt \bigg|^r \psi(x)dx \leq C2^{-\e j}\|f\|_{p}^r\|g\|_q^r\,,
\end{equation} 
for all $j\geq 1$, $p>1,q>1$ and $r>1/2$ with $ 1/p+1/q=1/r$, since (\ref{estlem}) follows 
by summing for all $j\geq 1$. Let $A_N =[-2^{-j-1}-100+N2^{-j}, 
-2^{-j-1}-100+(N+1)2^{-j}]$ for $N=0, \cdots, 200\cdot 2^{j}$. And 
let ${\bf 1}_{A_N}$ be the characteristic function of $A_N$.  
Notice that for a fixed $x\in [-100, 100]$, $x-t-t^2$ is in $A_{N-1}\cup A_{N}\cup {A_{N+1}}$ for some $N$
whenever $t$ is in the support of $\varphi(2^j\cdot)$.
Thus we can restrict $x$ in one of $A_N$'s  so that it suffices to show
that 
\begin{equation}\label{estlem2}
 \int \bigg|\int f_N(x-t)g_N(x-t-t^2)\varphi(2^jt)dt \bigg|^r \psi(x)dx 
\leq C2^{-\e j}\|f\|_{p}^r\|g\|_q^r\,
\end{equation} 
for all $j\geq 1$, $p>1,q>1$ and $r>1/2$ with $ 1/p+1/q=1/r$, where 
$f_N = f {\bf 1}_{A_N}$, $g_N=g{\bf 1}_{A_N}$  and $C$ is independent of $N$.
Let $T_N(f,g)(x) =\int f_N(x-t)g_N(x-t-t^2)\varphi(2^jt)dt $.
 By inserting absolute values throughout 
we get $T_N$ maps $L^p\times L^q$ to $L^r$ with a bound $C2^{-j}$ 
uniform in $N$, whenever $(1/p, 1/q, 1/r)$ belongs to the closed convex 
hull of the points $(1,0,1)$, $(0,1,1)$ and $(0,0,0)$.
Observe that by Cauchy-Schwarz inequality, 
\begin{equation}\label{estlem3}
 \int \big| T_N(f, g)(x)\big|^{1/2} \psi(x)dx
  \leq 2^{-j/2} \|T_{N}(f,g)\|_1^{1/2}
 \leq C\|f\|_1^{1/2}\|g\|_1^{1/2}\,.
\end{equation} 
Hence an interpolation yields a bound $C2^{-\e j}$ for 
all triples of reciprocal exponents within the convex hull of
$(1,1,2)$, $(1,0,1)$, $(0,1,1)$ and $(0,0,0)$. This finishes the proof
of Theorem \ref{thm2}.

\end{proof}

\section{A Decomposition}\label{para1}
\setcounter{equation}0

We begin the proof of our main Theorem by constructing 
an appropriate decomposition of the operator $ T _{j,\beta }$.  This is done 
by an analysis of the bilinear symbol associated with the operator. \\

A change of variables gives  
$$
 T_{j, \beta}(f, g)(x) = \int f(x-2^{-j}t)g(x-2^{-2j}t^2)e^{i2^{\beta j}/|t|^{\beta}} \rho(t) dt\,.
$$
Expressing $T_{j, \beta}$ in dual frequency variables, we have 
\begin{equation}\label{ft}
T_{j,\beta}(f,g)(x) = \int \int {\widehat{f}}(\xi){\widehat{g}}(\eta)
  e^{i(\xi+\eta)x} m_{j,\beta}(\xi,\eta)d\xi d\eta\,,
\end{equation}
where $m_{j, \beta}$ is the bilinear symbol of $T_{j, \beta}$, which equals to
$$
m_{j,\beta}(\xi,\eta) =\int \rho(t)e^{-i(2^{-j}\xi t + 2^{-2j}\eta t^2 -
 2^{\beta j} |t|^{-\beta})} dt\,.
$$

We introduce a resolution of the identity. 
Let $\Theta $ be a Schwarz function supported on $(-1,1)$ such 
that $\Theta(\xi)=1$ if $|\xi|\leq 1/2$. 
Set $\Phi$ to be a Schwartz function satisfying 
$$\wh\Phi(\xi)=\Theta(\xi/2)-\Theta(\xi)\,.$$
Then $\Phi$ is a Schwartz function such that $\wh\Phi$ is supported
on $\{\xi: 1/2 < |\xi| < 2\}$ and 
\begin{equation}\label{defofphi}
\sum_{m\in\mathbb Z}\wh\Phi\big(\frac{\xi}{2^m}\big)=1\,\, {\rm for}
\,\,{\rm all}\,\, \xi\in \mathbb R\backslash \{0\}\, ,
\end{equation}
and for any $m_0\in \mathbb Z$, 
\begin{equation}\label{defofphi0}
{\wh \Phi_{m_0}}(\xi) = \sum_{m=-\infty}^{m_0}
\wh\Phi{\big(\frac{\xi}{2^m}\big)} = \Theta\big(\frac{\xi}{2^{m_0+1}}\big)\,,
\end{equation}
which is a bump function supported on $(-2^{m_0+1}, 2^{m_0+1})$. 

We decompose the operator $T_{j,\beta}$ into 
$$
 T_{j, \beta} = \sum_{m, m'\in \mathbb Z} T_{m, m', j, \beta}\,, 
$$
where $T_{m, m', j, \beta}$ is defined by 
\begin{equation}\label{defofTmjb}
T_{m, m', j,\beta}(f,g)(x) = \int \!\int
 {\widehat{f}}(\xi){\widehat{g}}(\eta)
    e^{i(\xi+\eta)x} \wh\Phi\big( \frac{\xi}{2^{m+\beta j+j}}\big) 
\wh\Phi\big( \frac{\eta}{2^{m'+\beta j +2j }}\big)  m_{j,\beta}(\xi,\eta)d\xi d\eta\,,
\end{equation}
Let $b_\beta$ be a very large number depending on $\beta$. For $\beta >1$, we can choose  
$b_\beta = [ 100\beta^{100} ]$, where $[x]$ denotes the largest integer no more than $x$.  We then decompose $T_{j, \beta}$ into
\begin{align*}
T _{b,\beta ,\ell }&=\sum _{ (m,m')\in \Gamma _{\ell }} T _{m, m', j, \beta}, 
\qquad 1\le \ell \le 8 \,, 
\\ 
\Gamma_{1}&=\{ (m,m')\in \mathbb Z^2\mid {m\leq 10b_\beta, -b_\beta \leq m' \leq
b_\beta}\}\,, 
\\  
\Gamma_{2}&=\{ (m,m')\in \mathbb Z^2\mid {m> 10b_\beta, -b_\beta \leq m'\leq  b_\beta}\}\,, 
\\
\Gamma_{3}&=\{ (m,m')\in \mathbb Z^2\mid {m\leq -b_\beta,  m'< -b_\beta}\}\,, 
\\
\Gamma_{4}&=\{ (m,m')\in \mathbb Z^2\mid {-b_\beta< m < b_\beta, m'<-b_\beta}\}\,, 
\\
\Gamma_{5}&=\{ (m,m')\in \mathbb Z^2\mid {m\geq b_\beta, m'< -b_\beta}\}\,, 
\\
\Gamma_{6}&=\{ (m,m')\in \mathbb Z^2\mid {m\leq -b_\beta, m'> b_\beta}\}\,,
\\
\Gamma_{7}&=\{ (m,m')\in \mathbb Z^2\mid {-b_\beta < m < b_\beta, m'>b_\beta}\}\,, 
\\
\Gamma_{8}&=\{ (m,m')\in \mathbb Z^2\mid {m\geq b_\beta,  m' > b_\beta}\}\,.
\end{align*}
%$$
 %T_{j, \beta}= \sum_{\ell=1}^8 T_{j, \beta, \ell}\,,
%$$
%where 
%$$ 
%T_{j,\beta, 1}=\sum_{m\leq 10b_\beta, -b_\beta \leq m' \leq  b_\beta}T_{m, m', j, \beta},
% \,\,\,\, 
%T_{j, \beta, 2}=\sum_{ m> 10b_\beta, -b_\beta \leq m'\leq  b_\beta}T_{m, m', j, \beta}\,,
% $$ 
%$$
%T_{j,\beta, 3}=\sum_{m\leq -b_\beta,  m'< -b_\beta}T_{m, m', j, \beta},\,\, \,\,
%T_{j, \beta, 4}=\sum_{-b_\beta< m < b_\beta, m'<-b_\beta}T_{m, m', j, \beta}\,,
%$$
%$$
%T_{j, \beta, 5}=\sum_{ m\geq b_\beta, m'< -b_\beta}T_{m, m', j, \beta}\,,
%\,\,\,\,
%T_{j, \beta, 6}=\sum_{m\leq -b_\beta, m'> b_\beta}T_{m, m', j, \beta}
%$$
%$$
%T_{j, \beta, 7}=\sum_{-b_\beta < m < b_\beta, m'>b_\beta}T_{m, m', j, \beta},
%\,\,\,\,
%T_{j, \beta, 8}=\sum_{m\geq b_\beta,  m' > b_\beta}T_{m, m', j, \beta}\,.
%$$

Let $\phi_{\xi, \eta}(t)=2^m\xi t +2^{m'}\eta t^2-|t|^{-\beta}$.
Define $\ti m(\xi, \eta)$ by 
$$
 \ti m(\xi, \eta) = \int \rho(t) e^{-i2^{\beta j}\phi_{\xi, \eta}(t)} dt\,.
$$
$\phi_{\xi, \eta}$ depends on $m, m'$ and $\ti m $ depends on $j$ but
we suppress the dependence for notational convenience. 
Heuristically, we decompose the operator according to the 
occurrence of the critical points of the phase function 
$\phi_{\xi, \eta}(t)=2^m\xi t +2^{m'}\eta t^2-|t|^{-\beta}$ 
and $\phi'_{\xi, \eta}$ for $\xi, \eta\in {\rm supp}\wh\Phi$. \\

In cases $T_{j, \beta, 2}, T_{j, \beta, 3}, 
T_{j, \beta, 5}, T_{j, \beta, 6}$, the phase function does not have any critical point, and in fact one 
can obtain a very rapid decay of $ O(2 ^{-M \beta j})$ for these cases (see Section \ref{product}). 
In the cases $ T_{j, \beta, 4}$, $ T_{j, \beta, 7}$ and $ T_{j, \beta, 8}$ a critical point 
of the phase function can occur, and therefore the methods of stationary phase 
must be brought to bear in these cases, exploiting in particular the oscillatory term.  
These terms require the most extensive analysis. 
The case of $ T_{j, \beta, 1}$ doesn't fall in the either of the preceding cases, but is 
straight forward to control, as it is can be viewed  as essentially a para-product operator (see Section \ref{para-product}).

\section{Sum of $T_{j, \beta, 1}$'s }\label{para-product}
\setcounter{equation}0

Observe that $T_{j, \beta, 1}(f,g)$ equals to
$$
\sum_{-b_\beta\leq m'\leq b_\beta}\int\!\!\int
\wh f(\xi)\wh g(\eta)e^{i(\xi+\eta)x}\Theta\big(\frac{\xi}
{2^{ 10b_\beta+1 +\beta j +j}}\big) \wh\Phi\big(\frac{\eta}{2^{m'+\beta j +2j}} \big)
 m_{j, \beta}(\xi, \eta) d\xi d\eta\,.
$$
If $j$ is large enough (larger than some constant depending on $\beta$),
then $ 2^{m'+\beta j + 2j - 3}\leq |\xi+\eta| \leq  2^{m'+\beta j + 2j + 3}$ whenever 
$\xi$, $\eta$ are in the supports of the respective dilates of $\Theta$ and $\wh\Phi$.
Let $\Phi_3$ be a Schwartz function such that 
$\wh\Phi_3 $ is supported in $(1/16, 9)\cup (-9, -1/16)$ such that 
$\wh\Phi_3(\xi)=1$ if $ 1/8\leq |\xi|\leq 8 $. Then for large $j$, we
have 
$$
\langle T_{j, \beta, 1}(f,g), h\rangle
= \sum_{-b_\beta\leq m'\leq b_\beta}\int\!\!\int
\wh f_{j}(\xi)\wh g_{j,m'}(\eta)\wh h_{j,m'}(\xi +\eta)
 m_{j, \beta}(\xi, \eta) d\xi d\eta\,,
$$
where $f_j$, $g_{j, m'}$ and $h_{j, m'}$ satisfy
$$
\wh f_{j}(\xi) = \wh f(\xi)\Theta\big(\frac{\xi}{2^{ 10b_\beta+1 +\beta j +j}}\big)\,,
$$
$$
\wh g_{j,m'}(\eta)=\wh g(\eta)\wh\Phi\big(\frac{\eta}{2^{m'+\beta j +2j}} \big)\,,
$$
$$
\wh h_{j,m'}(\xi)= \wh h(\xi)\wh\Phi_3\big(\frac{\xi}{2^{m'+\beta j +2j}} \big)\,.
$$
We can also write $\langle T_{j, \beta, 1}(f,g), h\rangle$ by
$$
 \sum_{-b_\beta\leq m'\leq b_\beta}
 \int\rho(t)\bigg( \int f_j(x-2^{-j}t) g_{j, m'}(x-2^{-2j}t^2) h_{j, m'}(x) dx \bigg)    e^{i2^{\beta j}|t|^{-\beta}}dt\,.
$$
Summing all $j$ and applying Cauchy-Schwarz inequality, we dominate
$\big| \big\langle \sum_jT_{j, \beta, 1}, h\big\rangle\big|$ by
$$
 \|f\|_\infty \sum_{-b_\beta\leq m'\leq b_\beta}
 \int\int|\rho(t)|
\bigg(\sum_j\big| g_{j, m'}(x-2^{-2j}t^2)\big|^2\bigg)^{1/2} 
 \bigg(\sum_j\big|h_{j, m'}(x)\big|^2\bigg)^{1/2}dx dt\,,
$$
which, by one more use of Cauchy-Schwarz inequality,
is clearly majorized  by
$$
 C\sum_{-b_\beta\leq m'\leq b_\beta}
\|f\|_\infty \bigg\|\bigg(\sum_j\big| g_{j, m'}\big|^2\bigg)^{1/2}\bigg\|_2 \bigg\|\bigg(\sum_j\big| h_{j, m'}\big|^2\bigg)^{1/2}\bigg\|_2 \,.
$$
Littlewood-Paley Theorem then yields
\begin{equation}\label{est022T1}
\bigg| \big\langle \sum_jT_{j, \beta, 1}(f,g), h\big\rangle\bigg|
\leq C_\beta \|f\|_\infty\|g\|_2\|h\|_2\,.
\end{equation}
Therefore we obtain  
\begin{equation}\label{1est022T1}
\bigg\| \sum_jT_{j, \beta, 1}(f,g)\bigg\|_2\leq C_\beta \|f\|_\infty\|g\|_2\,.
\end{equation}

\section{The Simplest Case}\label{product}
\setcounter{equation}0

In this section we deal with the
cases $T_{j, \beta, 2}, T_{j, \beta, 3}, T_{j, \beta, 5}, T_{j, \beta, 6}$.  

\begin{lemma}\label{lemsimple}
Let $j, \beta\geq 0$ and $\ell=2, 3, 5, 6$. For any positive integer $M$
there is a constant $C$ such that 
\begin{equation}\label{est137}
 \big\|T_{j, \beta, \ell}(f, g)\big\|_r\leq C_M 2^{-\beta Mj}\|f\|_p\|g\|_q
\end{equation}
holds for all $1<p, q\leq \infty$ and $1/r=1/p+1/q$.
\end{lemma}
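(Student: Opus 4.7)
My plan is to show that in each of the four cases $\ell\in\{2,3,5,6\}$ the phase $\phi_{\xi',\eta'}(t)=2^m\xi' t+2^{m'}\eta' t^2-|t|^{-\beta}$ from Section \ref{para1} has no critical point on the support of $\rho$ whenever $(\xi',\eta')$ lies in the support of $\wh\Phi\otimes\wh\Phi$, with an explicit lower bound on $|\phi'_{\xi',\eta'}(t)|$. Repeated integration by parts (non-stationary phase) then yields rapid decay for $\ti m(\xi',\eta')$ together with its partial derivatives in $\xi',\eta'$. Combined with the standard bilinear multiplier bound $\|T_\sigma\|_{L^p\times L^q\to L^r}\le\|\wh\sigma\|_{L^1(\mathbb{R}^2)}$, which follows from writing $T_\sigma(f,g)(x)=\iint \wh\sigma(x-y,x-z)f(y)g(z)\,dy\,dz$ and applying Minkowski's integral inequality together with H\"older, this yields the desired operator bound on each $T_{m,m',j,\beta}$, after which summation in $\Gamma_\ell$ produces (\ref{est137}).

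The derivative is $\phi'_{\xi',\eta'}(t)=2^m\xi'+2^{m'+1}\eta' t+\beta|t|^{-\beta-2}t$, and on the supports $|\xi'|,|\eta'|\in[1/2,2]$, $|t|\in[1/8,1/2]$ one has $\beta|t|^{-\beta-1}\in[\beta 2^{\beta+1},\beta 8^{\beta+1}]$. Since $b_\beta=[100\beta^{100}]$ greatly exceeds $\beta 8^{\beta+1}$, in each case one of the three terms dominates: in $\Gamma_2$ and $\Gamma_5$ the term $2^m\xi'$ dominates, giving $|\phi'_{\xi',\eta'}(t)|\ge 2^{m-2}$; in $\Gamma_3$ the term $\beta|t|^{-\beta-1}$ dominates (the other two being bounded by $2^{-b_\beta+2}$), giving $|\phi'_{\xi',\eta'}(t)|\ge\beta 2^\beta$; and in $\Gamma_6$ the term $2^{m'+1}\eta' t$ dominates, giving $|\phi'_{\xi',\eta'}(t)|\ge 2^{m'-4}$. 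Denote by $\mu$ the resulting lower bound on $|\phi'|$.

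Differentiating under the integral sign gives $\partial_{\xi'}^\alpha\partial_{\eta'}^\gamma\ti m=(-i2^{\beta j})^{\alpha+\gamma}(2^m)^\alpha(2^{m'})^\gamma\int\rho(t)\,t^{\alpha+2\gamma}e^{-i2^{\beta j}\phi(t)}\,dt$, and $M$ iterations of non-stationary phase bound the last integral by $C_{M,\beta}(2^{\beta j}\mu)^{-M}$. Taking $M=M_0+\alpha+\gamma$ for arbitrary $M_0$ and inserting the value of $\mu$ in each case yields, uniformly in $(\xi',\eta')$ on the support of $\wh\Phi\otimes\wh\Phi$: a bound $C_{M_0,\beta}\,2^{-M_0\beta j-M_0 m}$ in $\Gamma_2$ and $\Gamma_5$, a bound $C_{M_0,\beta}\,2^{-M_0\beta j-M_0 m'}$ in $\Gamma_6$, and a bound $C_{M_0,\beta}\,2^{-M_0\beta j}$ in $\Gamma_3$ (with no extra $m,m'$ decay). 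The scaling identity $\|\wh\sigma_{m,m'}\|_{L^1(\mathbb{R}^2)}=\|\wh\tau\|_{L^1(\mathbb{R}^2)}$, where $\tau(\xi',\eta')=\wh\Phi(\xi')\wh\Phi(\eta')\ti m(\xi',\eta')$, together with the elementary estimate $\|\wh\tau\|_{L^1}\le C\sum_{\alpha,\gamma\le 2}\|\partial_{\xi'}^\alpha\partial_{\eta'}^\gamma\tau\|_{L^\infty}$, then converts these derivative bounds into the required decay for $\|T_{m,m',j,\beta}\|_{L^p\times L^q\to L^r}$.

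In $\Gamma_2,\Gamma_5,\Gamma_6$ the factors $2^{-M_0 m}$ or $2^{-M_0 m'}$ combined with the finite extent of the other index sum directly to give (\ref{est137}). The principal obstacle is $\Gamma_3$, where both indices are unbounded below and no such decay is available per term. The resolution is to consolidate the sum in the frequency variables before applying non-stationary phase: by (\ref{defofphi0}),
\[
\sum_{(m,m')\in\Gamma_3}\wh\Phi\Big(\frac{\xi}{2^{m+\beta j+j}}\Big)\wh\Phi\Big(\frac{\eta}{2^{m'+\beta j+2j}}\Big)=\Theta\Big(\frac{\xi}{2^{-b_\beta+\beta j+j+1}}\Big)\Theta\Big(\frac{\eta}{2^{-b_\beta+\beta j+2j}}\Big),
\]
a single compactly supported smooth bump. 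Call its product with $m_{j,\beta}(\xi,\eta)$ the consolidated symbol $\sigma_3(\xi,\eta)$, supported in the box $|\xi|\le A'$, $|\eta|\le B'$ with $A'=2^{-b_\beta+\beta j+j+1}$, $B'=2^{-b_\beta+\beta j+2j}$. On this support one verifies directly in the original $(\xi,\eta)$ variables that $|\partial_t(2^{-j}\xi t+2^{-2j}\eta t^2-2^{\beta j}|t|^{-\beta})|\gtrsim_\beta 2^{\beta j}$, so IBP gives $|\partial_\xi^\alpha\partial_\eta^\gamma m_{j,\beta}(\xi,\eta)|\le C_{M,\beta}\,2^{-M\beta j}\,2^{-j\alpha-2j\gamma}$. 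Integrating by parts in $\xi,\eta$ in the Fourier inversion formula for $\wh\sigma_3$, and combining these derivative bounds with the area $A'B'$ of the support, one obtains $\|\wh\sigma_3\|_{L^1(\mathbb{R}^2)}\le C_{M,\beta}\,2^{-M\beta j}$ for every $M$, completing the proof. The delicate point is that the cutoff $\Theta(\cdot/A')$ has derivatives at scale $(A')^{-1}\sim 2^{-\beta j-j+b_\beta}$ while those of $m_{j,\beta}$ are at scale $2^{-j}$, so the growth in $j$ of the support area must be absorbed by choosing $M$ sufficiently large relative to the number of derivatives used.
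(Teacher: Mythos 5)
Your overall strategy is the same as the paper's: show the phase is non-stationary on the relevant supports, use repeated integration by parts to get rapid decay of the rescaled symbol and its derivatives, and convert that into operator bounds (you via $\|\wh\sigma\|_{L^1}$, the paper via an equivalent Fourier-series expansion). Your lower bounds on $|\phi'|$ in each region are correct, and your treatment of $\Gamma_3$ — consolidating the double sum via the telescoping identity (\ref{defofphi0}) into a single bump, estimating derivatives of $m_{j,\beta}$ directly in the original variables, and balancing the growth of the support area against the $2^{-M\beta j}$ decay — is sound.

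The gap is in the sentence ``In $\Gamma_2,\Gamma_5,\Gamma_6$ the factors $2^{-M_0 m}$ or $2^{-M_0 m'}$ combined with the finite extent of the other index sum directly.'' That is true only for $\Gamma_2$, where $m'$ is confined to $[-b_\beta,b_\beta]$. In $\Gamma_5 = \{m\geq b_\beta,\, m'<-b_\beta\}$ the index $m'$ ranges over all integers below $-b_\beta$, and in $\Gamma_6 = \{m\leq -b_\beta,\, m'>b_\beta\}$ the index $m$ ranges over all integers below $-b_\beta$: in each case the ``other index'' has \emph{infinite} extent. Your per-term bound $\|\wh\sigma_{m,m'}\|_{L^1}\lesssim 2^{-M(\beta j+m)}$ for $\Gamma_5$ (resp. $2^{-M(\beta j+m')}$ for $\Gamma_6$) carries no decay in the negative index — indeed the $\alpha=\gamma=0$ term in your estimate $\|\wh\tau\|_{L^1}\lesssim\sum_{\alpha,\gamma\le 2}\|\partial^\alpha\partial^\gamma\tau\|_\infty$ gives $\|\ti m\|_\infty$, which is bounded below uniformly in $m'\to-\infty$ — so the sum over the negative index diverges. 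The remedy is exactly the consolidation you already used for $\Gamma_3$: in $\Gamma_5$ first collapse $\sum_{m'<-b_\beta}\wh\Phi(\eta/2^{m'+\beta j+2j}) = \Theta(\eta/2^{-b_\beta+\beta j+2j})$ to obtain a single symbol for each $m\ge b_\beta$ and then sum over $m$; in $\Gamma_6$ collapse the $m$-sum and then sum over $m'\ge b_\beta$. This is what the paper does in its proof of the case $\ell=6$, and your argument would go through once this step is inserted.
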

\begin{proof}

First we prove the case $\ell=3$. From (\ref{defofphi0}), 
we see that 
\begin{equation}\label{T11}
 T_{j, \beta, 3}(f, g)(x)
 = \int \!\int{\widehat{f}}(\xi){\widehat{g}}(\eta)
    e^{i(\xi+\eta)x} \Theta\big( \frac{\xi}{2^{-b_\beta+1+\beta j+j}}\big) 
\Theta\big( \frac{\eta}{2^{-b_\beta+1+\beta j +2j }}\big)  m_{j,\beta}(\xi,\eta)d\xi d\eta\,,
\end{equation}
Let $\ti m_{3, j, \beta}$ be defined by 
$$
\ti m_{3, j, \beta}(\xi, \eta)=\int \rho(t)e^{-i 2^{\beta j} 
\phi_{3, j,\xi, \eta}(t) } dt
$$
where
$$
\phi_{3, j, \xi, \eta}(t) = 2^{-b_\beta+1}\xi t + 2^{-b_\beta+1}\eta t^2 - |t|^{-\beta} \,.
$$
And it is clear by the definition of $b_\beta$ that 
\begin{equation}\label{simple1est1}
 \big|\phi'_{3, j, \xi, \eta}(t)\big|\geq C_\beta \,.
\end{equation}

Let $\Theta_1$ be a Schwartz function supported on 
$|\xi|< 3/2 $ and ${\Theta_1}(\xi)=1$ if $|\xi|\leq 1$.
An integration by parts gives that
\begin{equation}\label{dest11}
\big| \p^{\alpha_1}_\xi\p^{\alpha_2}_\eta 
\big({\Theta_1}(\xi){\Theta_1}(\eta)\ti m_{3, j, \beta}(\xi, \eta)\big)\big|
\leq C_{M, \beta}2^{-\beta Mj}
    \big(1+|\xi|+|\eta|\big)^{-(\alpha_1+\alpha_2)}
\end{equation}
holds for all non-negative integers $\alpha_1, \alpha_2$ and $M$. 
Then we expand this function into its Fourier series to obtain 
\begin{equation}
 \big({\Theta_1}(\xi){\Theta_1}(\eta)\ti 
m_{3, j, \beta}(\xi, \eta)\big) = \sum_{n_1, n_2} 
C_{n_1, n_2}e^{2\pi i n_1\xi+2\pi i n_2\eta}\,,
\end{equation}
where the Fourier coefficients $C_{n_1, n_2}$'s satisfy
\begin{equation}\label{Fest11}
 |C_{n_1, n_2}|\leq C_{M, \beta}2^{-\beta Mj} (1+|n_1|)^{-M}(1+|n_2|)^{-M}
\end{equation} 
for all $M\geq 0$. Changing variables, we obtain 
$$
 {\Theta_1}\big( \frac{\xi}{2^{-b_\beta +1+\beta j+j}}\big) 
{\Theta_1}\big( \frac{\eta}{2^{-b_\beta+1+\beta j +2j }}\big) 
 m_{j,\beta}(\xi,\eta)
\!\!= \!\!\!\sum_{n_1, n_2} C_{n_1, n_2}e^{2\pi i n_12^{b_\beta-1-\beta j-j}\xi+2\pi i n_2
 2^{b_\beta-1-\beta j -2j }\eta}\,,
$$ 
since $m_{j, \beta}(\xi, \eta)=
\ti m_{3, j, \beta}(\xi/2^{-b_\beta+1+\beta j +j}, 
\eta/2^{-b_\beta+1+\beta j+2j})$. 
And then we can write $T_{j, \beta, 3}$ as a product, i.e., 
$$
 T_{j, \beta, 3}(f, g)(x)=\sum_{n_1, n_2}C_{n_1, n_2}f_{n_1, j}(x) 
g_{n_2, j}(x)\,,
$$
where 
$$
 {\wh f_{n_1, j}}(\xi) = \wh f(\xi) e^{2\pi in_1\xi/2^{-b_\beta+1+\beta j+j}}
  \Theta\big(\frac{\xi}{2^{-b_\beta+1+\beta j+j}} \big)\,
$$
$$
 {\wh g_{n_2, j}}(\eta) = \wh g(\eta) e^{2\pi in_2\eta/2^{-b_\beta+1+\beta j+2j}}
  \Theta\big(\frac{\eta}{2^{-b_\beta+1+\beta j+2j}} \big)\,
$$
Clearly (\ref{Fest11}) yields (\ref{est137}) since it is trivial to get 
the $L^r$ estimates for the product of two functions.  \\

We now turn to the proof for the case $\ell=6$. 
From (\ref{defofphi0}), we have  that 
\begin{equation}\label{T31}
 T_{j, \beta, 6}(f, g)(x)
 =\sum_{m'> b_\beta} \int \!\int{\widehat{f}}(\xi){\widehat{g}}(\eta)
    e^{i(\xi+\eta)x} \Theta\big( \frac{\xi}{2^{-b_\beta+1+\beta j+j}}\big) 
\wh\Phi\big( \frac{\eta}{2^{m'+\beta j +2j }}\big)  m_{j,\beta}(\xi,\eta)d\xi d\eta\,,
\end{equation}
Let $\ti m_{6, j, \beta}$ be defined by 
$$
\ti m_{6, j, \beta}(\xi, \eta)=\int \rho(t)e^{-i 2^{\beta j} 
\phi_{6, j,\xi, \eta}(t) } dt
$$
where
$$
\phi_{6, j, \xi, \eta}=2^{-b_\beta+1}\xi t + 2^{m'}\eta t^2 - |t|^{-\beta} \,.
$$
Then  the definition of $b_\beta$ and the fact $m'> b_\beta$
gives 
\begin{equation}\label{simple1est3}
 \big|\phi'_{6, j, \xi, \eta}(t)\big|\geq C_\beta 2^{m'}\,.
\end{equation} 
Let $\Phi_6$ be a function such that $\wh\Phi_6$ is  a Schwartz function supported on 
$1/4<|\xi|< 5/2 $ and $\wh{\Phi_6}(\xi)=1$ if $1/2\leq |\xi|\leq 2$.

By integration by parts we get that
\begin{equation}\label{dest31}
\big| \p^{\alpha_1}_\xi\p^{\alpha_2}_\eta 
\big({\Theta_1}(\xi){\wh{\Phi_6}}(\eta)\ti m_{6, j, \beta}(\xi, \eta)\big)\big|
\leq C_{M, \beta}2^{-\beta Mj}2^{-M m'}
    \big(1+|\xi|+|\eta|\big)^{-(\alpha_1+\alpha_2)}
\end{equation}
holds for all non-negative integers $\alpha_1, \alpha_2$ and $M$. 
By Fourier series we can expand the function as following. 
\begin{equation}
 \big({\Theta_1}(\xi){\wh{\Phi_6}}(\eta)\ti 
m_{6, j, \beta}(\xi, \eta)\big) = \sum_{n_1, n_2} 
C_{n_1, n_2}e^{2\pi i n_1\xi+2\pi i n_2\eta}\,,
\end{equation}
where the Fourier coefficients $C_{n_1, n_2}$'s satisfy
\begin{equation}\label{Fest31}
 |C_{n_1, n_2}|\leq C_{M, \beta}2^{-\beta Mj}2^{-M m'} 
(1+|n_1|)^{-M}(1+|n_2|)^{-M}
\end{equation} 
for all $M\geq 0$. A change of variables then yields
$$
 {\Theta_1}\big( \frac{\xi}{2^{-b_\beta +1+\beta j+j}}\big) 
{\wh{\Phi_6}}\big( \frac{\eta}{2^{m'+\beta j +2j }}\big) 
 m_{j,\beta}(\xi,\eta)
= \sum_{n_1, n_2} C_{n_1, n_2}e^{2\pi i n_12^{b_\beta-1-\beta j-j}\xi+2\pi i n_2 2^{-m'-\beta j -2j }\eta}\,,
$$ 
since $m_{j, \beta}(\xi, \eta)=
\ti m_{6, j, \beta}(\xi/2^{-b_\beta+1+\beta j+j}, \eta/2^{m'+
\beta j+2j})$. 
And hence $T_{j, \beta, 6}$ can be written as a product, i.e., 
$$
 T_{j, \beta, 6}(f, g)(x)=\sum_{n_1, n_2}\sum_{m'\geq b_\beta}C_{n_1, n_2}f_{n_2, j}(x) 
g_{n_1, j, m'}(x)\,,
$$
where 
$$
 {\wh f_{n_1, j}}(\xi) = \wh f(\xi) e^{2\pi in_1\xi/2^{-b_\beta+1+\beta j+j}}
  \Theta\big(\frac{\xi}{2^{-b_\beta+1+\beta j+j}} \big)\,
$$
$$
 {\wh g_{n_2, j, m'}}(\eta) = \wh g(\eta) e^{2\pi in_2\eta/2^{m'+\beta j+2j}}
  \wh\Phi\big(\frac{\eta}{2^{m'+\beta j+2j}} \big)\,
$$
(\ref{est137}) follows immediately from (\ref{Fest31}) because 
each term in the sum is trivially bounded. \\

The case $\ell=2$ can be obtained similarly by using Fourier series.
 The case $\ell=5$ is similar to the case $\ell=6$ by symmetry.
 We omit the details for these two cases. Therefore we finish the proof.
\end{proof}

\section{ Case $T_{j, \beta, 4}$ }\label{T4}
\setcounter{equation}0

From the definition of $T_{j, \beta, 4}$ and (\ref{defofphi0}), we have
that $T_{j, \beta, 4}(f,g)(x)$ equals to
\begin{equation}\label{defofT4}
 \sum_{-b_\beta < m< b_\beta  } T_{4, m, j, \beta}(f, g)(x)\,.
\end{equation}
where 
$$
 T_{4, m, j, \beta}(f, g)(x) =
\int\!\!\int
\wh f(\xi)\wh g(\eta)e^{i(\xi+\eta)x}
\wh\Phi\big(\frac{\xi}{2^{m+\beta j +j}} \big)
\Theta\big(\frac{\eta}{2^{
-b_\beta+1 +\beta j+2j}}\big) 
 m_{j, \beta}(\xi, \eta) d\xi d\eta\,.
$$
 
We need to show the following lemma. 

\begin{lemma}\label{lemT4}
Let $j\geq 0$, $\beta>1$ and $-b_\beta< m< b_\beta $.
There is a positive number $\e_0$ and  a constant $C$ such that 
\begin{equation}\label{est4}
 \big\|T_{4, m,j, \beta}(f, g)\big\|_2\leq C2^{-\e_0 j}\|f\|_\infty
\|g\|_2\,.
\end{equation}
holds for all $f\in L^\infty$ and $g\in L^2$.
\end{lemma}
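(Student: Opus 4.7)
The plan is first to apply stationary phase in $t$ to the symbol $m_{j,\beta}(\xi,\eta)$ in order to extract a factor of $2^{-\beta j/2}$ from the oscillatory factor $e^{i2^{\beta j}|t|^{-\beta}}$, and then to bound the resulting reduced bilinear multiplier by a trilinear oscillatory-integral argument. The first observation I would verify is that on the support of the cutoffs $\wh\Phi(\xi/2^{m+\beta j+j})\Theta(\eta/2^{-b_\beta+1+\beta j+2j})$, the phase $\phi(t;\xi,\eta) = 2^{-j}\xi t + 2^{-2j}\eta t^2 - 2^{\beta j}|t|^{-\beta}$ satisfies $|\p_t^2\phi|\asymp 2^{\beta j}$ uniformly for $t\in[1/8,1/2]$, because the $|t|^{-\beta}$-term of size $2^{\beta j}$ dominates the $\eta t^2$-term of size at most $2^{\beta j-b_\beta}$; this is precisely the point of choosing $b_\beta$ large. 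Consequently there is a unique nondegenerate critical point $t_0(\xi,\eta)\in[1/8,1/2]$, and one-variable stationary phase yields
\begin{equation*}
m_{j,\beta}(\xi,\eta) = 2^{-\beta j/2}\,a(\xi,\eta)\,e^{i\Psi(\xi,\eta)} + R(\xi,\eta),
\end{equation*}
with $a$ a smooth symbol bounded together with all rescaled derivatives in $\ti\xi = \xi/2^{m+\beta j+j}$ and $\ti\eta = \eta/2^{-b_\beta+1+\beta j+2j}$; phase $\Psi(\xi,\eta) = -(1+1/\beta)2^{-j}\xi t_0 - (1+2/\beta)2^{-2j}\eta t_0^2$, which by the envelope theorem satisfies $\p_\xi\Psi = -2^{-j}t_0$ and $\p_\eta\Psi = -2^{-2j}t_0^2$; and a remainder $R$ of size $2^{-(1+\beta/2)j}$ whose contribution to the operator norm is dominated by that of the main term.

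Next I would replace $f,g$ by their frequency projections $F,G$ (so that $\|F\|_\infty\lesssim\|f\|_\infty$ and $\|G\|_2\le\|g\|_2$) and dualize against $h\in L^2$, reducing the lemma to the trilinear estimate
\begin{equation*}
\Bigl|\iint \wh F(\xi)\wh G(\eta)\,\overline{\wh H(\xi+\eta)}\,a(\xi,\eta)\,\wh\Phi(\xi/2^{m+\beta j+j})\,\Theta(\eta/2^{-b_\beta+1+\beta j+2j})\,e^{i\Psi(\xi,\eta)}\,d\xi\,d\eta\Bigr|\lesssim \|F\|_\infty\|G\|_2\|H\|_2.
\end{equation*}
Undoing the Fourier transform and using the envelope-theorem identities, this becomes a trilinear form in $F,G,H$ whose bilinear kernel concentrates near the parabolic curve $\{(u,v):v=u^2,\,u\asymp 2^{-j}\}$ and carries a residual smooth oscillatory factor of rescaled amplitude $O(1)$. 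This form is of exactly the shape handled by the paper's trilinear oscillatory-integral estimate (Lemma~\ref{osc4bj}), which, once invoked, supplies the required uniform constant bound. Combined with the $2^{-\beta j/2}$ extracted from stationary phase, this gives $\|T_{4,m,j,\beta}(f,g)\|_2\lesssim 2^{-\beta j/2}\|f\|_\infty\|g\|_2$, hence $\e_0 = \beta/2>1/2$ since $\beta>1$.

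The main obstacle will be that the oscillatory factor $e^{i\Psi(\xi,\eta)}$ has absolute phase size $\sim 2^{\beta j}$, so the symbol $a\,\wh\Phi\,\Theta\,e^{i\Psi}$ does not satisfy Coifman--Meyer/Mikhlin--H\"ormander estimates: its rescaled $(\ti\xi,\ti\eta)$-derivatives are of order $2^{\beta j}$ rather than $O(1)$, and a direct paraproduct bound fails. One must instead exploit the special structure of $\Psi$---in particular that $\nabla_{(\xi,\eta)}\Psi=(-2^{-j}t_0,-2^{-2j}t_0^2)$ parameterises a parabola in physical space and that the Hessian $\nabla^2_{(\xi,\eta)}\Psi$ is rank-one degenerate (both consequences of the fact that, to leading order, $t_0$ depends only on $\xi$). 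A naive ``freeze $t_0$ and linearise'' strategy would require a partition of size $\sim 2^{\beta j}$ and would destroy the $2^{-\beta j/2}$ gain on summation; side-stepping this either by invoking Lemma~\ref{osc4bj} directly, or by a careful integration-by-parts argument in $(\ti\xi,\ti\eta)$ exploiting the bounded rescaled derivatives of $\Psi$, is the conceptual heart of the case.
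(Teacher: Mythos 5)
Your outline correctly identifies the stationary phase factor $2^{-\beta j/2}$ and the central role of the trilinear oscillatory estimate (Lemma~\ref{osc4bj}), but two essential steps in the reduction are missing, and both are load-bearing. First, Lemma~\ref{osc4bj} bounds the trilinear form by $\|f_1\|_4\|f_2\|_2\|f_3\|_2$, where $f_1$ lives on the \emph{frequency} side (it plays the role of $\widehat F$ after dualizing). There is no inequality of the form $\|\widehat F\|_4\lesssim\|F\|_\infty$, so you cannot simply plug $F\in L^\infty$ into Lemma~\ref{osc4bj} to obtain the bound $\lesssim\|F\|_\infty\|G\|_2\|H\|_2$ you write down. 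The paper instead interpolates the $L^4\times L^2\times L^2$ estimate of Lemma~\ref{osc4bj} with the trivial $L^1\times L^2\times L^2$ estimate (\ref{4est122}) to obtain an $L^2\times L^2\times L^2$ estimate (\ref{444est222}), and it is precisely at this interpolation that the hypothesis $\beta>1$ becomes necessary: the bound (\ref{4est422}) coming from the Phong--Stein/H\"ormander oscillatory operator estimate carries a loss $2^{j/4}$ (from the Cauchy--Schwarz in $\tau$, which picks up a factor $b_2^{-1/2}\sim 2^{j/2}$), so the $L^4$ gain is only $2^{-j(\beta(1-\e)-1)/4}$, which decays only when $\beta>1$. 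Your claim that Lemma~\ref{osc4bj} ``supplies the required uniform constant bound'' misreads the lemma, and consequently your asserted exponent $\e_0=\beta/2$ is wrong: the paper's $\e_0$ for Lemma~\ref{lemT4} is $(\beta-1)/5$, which degenerates as $\beta\downarrow 1$, as it must.

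Second, even after one has the genuine $L^2\times L^2\times L^2$ trilinear bound (\ref{T4222}), passing to the $L^\infty\times L^2\to L^2$ statement of Lemma~\ref{lemT4} is not a matter of ``replacing $f$ by its frequency projection''; Plancherel gives $L^2$, not $L^\infty$. The paper handles this endpoint with a spatial localization: a smooth partition of unity $\sum_n \Id^*_n=1$, writing the dualized form as $\sum_{k_1,k_2,n}\La_{k_1,k_2,n,\ldots}$, then showing that the terms with $\max\{|k_1|,|k_2|\}>2^{\e j}$ decay rapidly (because the kernel is spatially localized to $|t|\sim 1$), while for $\max\{|k_1|,|k_2|\}\le 2^{\e j}$ one applies (\ref{T4222}) to the localized pieces, using $\|\Id^*_{n+k_1}f_{j,\beta}\|_2\lesssim\|f\|_\infty$ (valid because $\Id^*_{n}$ has essentially unit-length support) and Cauchy--Schwarz in $n$. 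The factor $2^{2\e j}$ picked up by summing over $(k_1,k_2)$ is then absorbed by the decay $2^{-\e_0 j}$ from (\ref{T4222}) upon choosing $\e$ small. Without this localization argument your proof does not reach the stated $L^\infty\times L^2\to L^2$ conclusion.
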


In Lemma \ref{lemT4}, the positive number $\e_0$ can be chosen to be $(\beta -1)/5$.\\

Let $\ti m_{j, \beta}$ be defined by 
\begin{equation}
 \ti m_{j, \beta}(\xi, \eta) = m_{j, \beta}(2^j \xi, 2^j\eta)\,.
\end{equation}
Define $\ti T_{4, m, j, \beta}$ by 
\begin{equation}
\int\!\!\int
\wh f(\xi)\wh g(\eta)e^{i(\xi+\eta)x}
\wh\Phi\big(\frac{\xi}{2^{m+\beta j}} \big)
\Theta\big(\frac{\eta}{2^{
-b_\beta+1 +\beta j + j}}\big) 
 \ti m_{j, \beta}(\xi, \eta) d\xi d\eta\,.
\end{equation}

By a rescaling argument, to prove Lemma \ref{lemT4}, 
it is sufficient to show 
\begin{equation}\label{est44}
  \big\|\ti T_{4, m,j, \beta}(f, g)\big\|_2\leq C2^{-\e_0 j}\|f\|_\infty
\|g\|_2\,.
\end{equation}

\subsection{The tri-linear form}
To prove (\ref{est44}), we first reduce the problem to the $L^2$ estimate
of a tri-linear form.
Let $\La_{4, m, j, \beta}$ be the tri-linear form defined by
\begin{equation}\label{triform4}
 \La_{4, m, j, \beta}(f_1, f_2, f_3)
 = \int f_1(\xi) f_2(\eta) f_3 (\xi +\eta)
\wh\Phi\big(\frac{\xi}{ 2^{m+1 +\beta j }}\big)
   \Theta \big(\frac{\eta}{2^{-b_\beta +2 +\beta j+j}}\big)
  \ti m_{j, \beta}(\xi, \eta) d\xi d\eta\,.
\end{equation}

We claim that in order to prove (\ref{est44}), it is sufficient to prove 
\begin{equation}\label{T4222}
\big|\La_{4, m,j, \beta}(f_1, f_2, f_3)\big|\leq 
C 2^{-\e_0 j}\|f_1\|_2\|f_2\|_2\|f_3\|_2\,.
\end{equation}

Indeed,  
notice that $\ti T_{4, m, j, \beta}$ equals to
$$
\int f_{j, \beta}(x-t)g_{j, \beta}(x-2^{-j}t^2)
   \rho(t) e^{-i2^{\beta j}|t|^{-\beta}} dt\,,
$$
where $f_{j, \beta}$ and $g_{j, \beta}$ satisfy
$$
\wh f_{j, \beta}(\xi) = \wh f(\xi) \wh\Phi(\xi/2^{m + \beta j})\,,
$$
$$
\wh g_{j, \beta}(\eta)=\wh g(\eta)\Theta(\eta/ 2^{-b_\beta+1+\beta j +j })\,.
$$
By a similar estimate to (\ref{estlem}) and an interpolation, one can easily obtain 
\begin{equation}\label{est44p}
  \big\|\ti T_{4, m,j, \beta}(f, g)\big\|_2\leq C2^{-\ti\e_0 j}\|f\|_p
\|g\|_2\,,
\end{equation}
for all $p$ in $(1,\infty)$ and some positive number $\ti\e_0$.
However, we have to deal with the endpoint case $p=\infty$, which 
requires the following technical work.

Let $\psi$ be a non-negative Schwartz function such that $\wh \psi$
is supported in $[-1/100,1/100]$ and satisfies $\wh\psi(0)=1$. 
 And for $n\in \mathbb Z$, define
$$
I_{n} = [n, (n+1)]\,.
$$
Let $\Id_I$ be the characteristic function of the set $I$. 
Define
$$
\Id^*_{n}(x) = \Id_{I_{n}}*\psi(x)\,.
$$
$$
\Id^{**}_n(x) = \frac{1}{\big(1+ |x-n|^2\big)^{200}}
$$

It is clear that 
$$
\sum_n\Id^*_{n}(x)=1\,.  
$$
$\Id^*_{n}$ can be considered as essentially $\Id_{I_{n}}$. 
We thus can write 
$\langle \ti T_{4, m, j, \beta}(f, g), h \rangle$ as
$$
\int\!\!\int \sum_n \big(\Id^*_{n}f_{j, \beta}\big)(x-t)
 \sum_{n}\big(\Id^*_{n}g_{j, \beta}\big)(x-2^{-j}t^2)
    \sum_{n}\big(\Id^*_n h\big)(x)\rho(t) 
e^{-i2^{\beta j}|t|^{-\beta}}  dx\,dt,
$$
which is equal to
$$
\sum_{k_1}\sum_{k_2} \sum_n 
 \La_{k_1, k_2, n, m, j, \beta, 4}(f, g, h)\,,
$$
where
$
\La_{k_1, k_2, n, m, j, \beta, 4}(f, g, h)$ equals to
$$
\int\!\!\int \big(\Id^*_{n+k_1}f_{j, \beta}\big)(x-t)
 \big(\Id^*_{ n+k_2}g_{j, \beta}\big)(x-2^{-j}t^2)
\big(\Id^*_n h\big)(x)\rho(t) 
e^{-i2^{\beta j}|t|^{-\beta}}  dx\,dt\,.
$$
Let $\e$ be a small positive number. 
%When we sum all 
%$|k_1|>2^{\e j}$ or $|k_2|> 2^{\e j}$ 
%in the previous sum,
Putting absolute value throughout, we  
estimate the sum of $\La_{k_1, k_2, n, m, j, \beta, 4}(f, g, h)$ 
for all $(k_1, k_2, n)$'s with $\max\{|k_1|, |k_2|\}> 2^{\e j}$
 by 
$$
\sum_{\substack {(k_1, k_2): \\
 \max\{|k_1|, |k_2|\}> 2^{\e j}} }\!\!\!\!\!\!\!\!\!\!\!\!\!\!\sum_{n}
 \int \!\!\!\int   
 \frac{C_N|\big(\Id^{**}_{n+k_1}f_{j, \beta}\big)(x-t)|
|\big(\Id^{**}_{n+k_2}g_{j, \beta}\big)(x-2^{-j}t^2)|
|\big(\Id^*_n h\big)(x)  |
|\rho(t)| }{\big(1+ |k_1+t|\big)^N \big(1+ |k_2+2^{-j}t^2|\big)^N}
   dx dt \,,
$$
for all positive integers $N$. Notice that 
$t\sim 1$ when $t$ is in the support of $\rho$. Thus, 
for $\max\{|k_1|, |k_2|\} > 2^{\e j}$, we estimate this sum by 
$$
C_N2^{-N\e j}\|f\|_{\infty}\sum_{k_2}\frac{1}{(1+|k_2|)^N}
\sum_n \big\|\Id^{**}_{n+k_2}g_{j,\beta}\big\|_2
 \big\|\Id^{**}_n h\big\|_2
\leq C_N2^{-N\e j}\|f\|_\infty\|g\|_2\|h\|_2\,.
$$
We now turn to sum $\La_{k_1, k_2, n, m, j, \beta, 4}(f, g, h)$ for
all $|k_1|< 2^{\e j}$ and $|k_2|< 2^{\e j}$.
 Note that when $j$ is large, $\Id^*_{n+k_1}f_{j, \beta}$'s Fourier 
transform is supported in a small neighborhood of the support of 
$\wh{f_{j, \beta}}$. And $ \Id^*_{ n+k_2}g_{j, \beta}$ has a similar 
property.  Thus we have 
$$
\La_{k_1, k_2,n,  m, j, \beta, 4}(f, g, h) 
 = \La_{4, m, j,\beta} (\wh{\Id^*_{n+k_1}f_{j, \beta}}, 
 \wh{ \Id^*_{n+k_2} g_{j, \beta}}, \wh {\Id^*_n h})\,.
$$
And then (\ref{T4222}) yields
\begin{equation}\label{ksmall4}
\sum_{\substack{(k_1, k_2):\\ \max\{|k_1|, |k_2|\}\leq 2^{\e j}}}\!\!\!\!\!\!\!\!\!\!\!\!\sum_n
 |\La_{k_1, k_2,n,  m, j, \beta, 4}(f, g, h) |
\leq 2^{-\e_0 j}\!\!\!\!\!\!\!\!\!\!\!\!\!\!\!\!
\sum_{\substack{(k_1, k_2):\\ \max\{|k_1|, |k_2|\}\leq 2^{\e j}}}
\!\!\!\!\!\!\!\!\!\!\!\!\!
\sum_n \|\Id^*_{n+k_1}f_{j, \beta}\|_2
 \|\Id^*_{n+k_2} g_{j, \beta}\|_2
 \|\Id^*_n h  \|_2\,,
\end{equation}
which is clearly bounded by
$$
 C 2^{(2\e-\e_0)j}\|f\|_\infty\|g\|_2\|h\|_2\,.
$$
Since $\e$ can be chosen to be very small, we thus obtain 
(\ref{est4}) if (\ref{T4222}) is assumed to be right.
Therefore the remaining thing that we need to prove is (\ref{T4222})
for the boundedness of the operator $T_{j,\beta, 4}$. 
\\

Define $ m_{4, \beta, j}$ to be
$$
 m_{4, \beta, j}(\xi, \eta) =
\int\rho(t) e^{i2^{\beta j}\phi_{4, \xi, \eta}(t)} dt\,,
$$
where  
$$
\phi_{4,  \xi, \eta}(t)= 2^{m+1}\xi t + 
2^{-b_\beta+2}\eta t^2 - |t|^{-\beta}\,.
$$

Define the tri-linear form $\La_{j, \beta, m, 4}$ by
\begin{equation}\label{defLajb4}
\La_{j, \beta, m, 4}(f_1, f_2, f_3)=
\int\!\!\int  f_1(\xi) f_2(\eta) 
f_3\big(2^{ b_\beta+m-1-j}\xi+\eta\big)
\wh \Phi(\xi)\Theta(\eta)m_{4, \beta, j}(\xi, \eta)d\xi d\eta\,.
\end{equation}

By rescaling, to get (\ref{T4222}), it is sufficient to prove 
that there exists a positive number $\e_0$ such that 
\begin{equation}\label{triest4bj}
\big|\La_{j, \beta, m,  4}(f_1, f_2, f_3) \big|\leq 
 C_\beta 2^{-\beta j/2} 2^{-\e_0 j} \|f_1\|_2\|f_2\|_2\|f_3\|_2\,
\end{equation}
holds if $ -b_\beta< m< b_\beta $.\\

\subsection{The Stationary Phase}

For simplicity we suppose that $\rho$ is supported on $[1/8, 1/2]$.
When the support of $\rho$ is $[-1/2, -1/8]$, the same method works. 
The phase function $\phi_{4, \xi, \eta}$ satisfies the favorable estimate:
\begin{equation}\label{largeD24}
 \big| \phi''_{4, \xi, \eta}(t)\big| \geq C_\beta
\end{equation}
whenever $\eta$ in the support of $\Theta$ and 
 $t\in [1/16, 9/16]$  due to the definition of 
$b_\beta$. 
Thus $\phi'_{4, \xi, \eta}$ is monotone in $[1/16, 9/16]$.  
If in $[1/16, 9/16]$ 
there is no critical point of $\phi_{4, \xi, \eta}$, then 
(\ref{largeD24}) yields that
\begin{equation}\label{largeD14}
 \big| \phi'_{4, \xi, \eta}(t)\big| \geq C_\beta\,
\end{equation}
holds for all $t\in[1/8, 1/2]$. Integration by parts then gives
\begin{equation}\label{smallm4}
 |m_{4, \beta, j}(\xi, \eta)|\leq C_N2^{-N\beta j}
\end{equation}
for all positive integers $N$. Hence in this case, 
(\ref{triest4bj}) becomes trivial. \\

The difficult case is when there is a unique critical point of 
$\phi_{4, \xi, \eta}$ in $[1/16, 9/16]$. Let us call this critical point
$t_0=t_0(\xi, \eta)$. 
The method of stationary phase yields that
$$m_{4, \beta, j}(\xi, \eta)
 \sim
 \frac{Ce^{i 2^{\beta j}\phi_{4, \xi, \eta}(t_0)}}{2^{\beta j/2}} \,,
$$
since we have
(\ref{largeD24}) and a trivial upper bound when $\xi, \eta$
are in the supports of $\wh\Phi $ and $\Theta $ respectively.
%The method of stationary phase can be found in Stein's book \cite{Stein} and %Sogge's book \cite{sogge}. 
The stationary phase gives a high oscillation, that is, 
the phase $ 2^{\beta j}e^{\phi_{4, \xi, \eta}(t_0)}$ 
causes $e^{i2^{\beta j}\phi_{4, \xi, \eta}}$ to be 
a highly oscillatory factor whenever $\xi\in\operatorname{supp}\wh{\Phi}$ and
$\eta\in\operatorname{supp}\Theta$. 
 And we will see that this high oscillation 
yields a desired estimate. To prove (\ref{triest4bj}), it is
enough to show the following lemma.

\begin{lemma}\label{osc4bj}
Let $-b_\beta < m < b_\beta$. And 
let $\ti \La_{j, \beta, m, 4}$ be defined by 
\begin{equation}\label{defLajb40}
\ti \La_{j, \beta, m, 4}(f_1, f_2, f_3)=
\int \!\!\int f_1(\xi) f_2(\eta) f_3\big(2^{b_\beta+m-1-j}\xi+\eta\big)
\wh \Phi(\xi)\Theta(\eta) e^{i 2^{\beta j}\phi_{4,\xi, \eta}(t_0)}d\xi d\eta\,.
\end{equation}
Suppose that $\beta > 1$. Then there exist a positive number $\e_0$ 
and a constant $C_\beta$ independent of $j$
such that
\begin{equation}\label{triest4bj0}
\big|\ti\La_{j, \beta, m, 4}(f_1, f_2, f_3) \big|\leq 
 C_\beta 2^{-\e_0 j} \|f_1\|_4\|f_2\|_2\|f_3\|_2\,
\end{equation}
holds for all functions $f_1\in L^4$ and $f_2, f_3\in L^2$. 
\end{lemma}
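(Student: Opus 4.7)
The plan is to use Fourier inversion in $f_2$, stationary phase in $\eta$, and a $TT^*$ bilinear oscillatory integral argument to extract the decay. The envelope theorem applied to the critical-point equation $\phi'_{4,\xi,\eta}(t_0) = 0$ yields
\[
\Psi_\xi(\xi,\eta) = 2^{\beta j+m+1} t_0, \qquad \Psi_\eta(\xi,\eta) = 2^{\beta j - b_\beta + 2} t_0^2,
\]
where $\Psi(\xi,\eta) := 2^{\beta j}\phi_{4,\xi,\eta}(t_0(\xi,\eta))$, and implicit differentiation of $\phi'_{4,\xi,\eta}(t_0) = 0$ together with the lower bound \eqref{largeD24} for $\phi''_{4,\xi,\eta}(t_0)$ gives $|\Psi_{\xi\eta}|, |\Psi_{\eta\eta}| \sim 2^{\beta j}$ uniformly on the supports of $\wh\Phi$ and $\Theta$. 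The coupling parameter $\alpha := 2^{b_\beta+m-1-j}$ is of size $\sim 2^{-j}$ because $|m| < b_\beta$.

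Writing $f_2(\eta) = (2\pi)^{-1}\int \wh{f_2}(\tau)e^{i\tau\eta}d\tau$ and applying Cauchy-Schwarz in $\tau$ together with Plancherel, the estimate \eqref{triest4bj0} reduces to
\[
\int |A_\tau(f_1,f_3)|^2\,d\tau \lesssim 2^{-2\e_0 j} \|f_1\|_4^2\|f_3\|_2^2,
\]
where $A_\tau(f_1,f_3) := \iint f_1(\xi)f_3(\alpha\xi+\eta)\wh\Phi(\xi)\Theta(\eta) e^{i[\Psi(\xi,\eta) + \tau\eta]}\,d\xi\,d\eta$. The $\eta$-phase derivative $\Psi_\eta + \tau$ has magnitude $\gtrsim 2^{\beta j}$ unless $|\tau| \sim 2^{\beta j}$; outside this critical range, integration by parts in $\eta$ gives arbitrarily rapid decay in $\max(|\tau|,2^{\beta j})/2^{\beta j}$. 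In the critical range, the $\eta$-critical point $\eta_0 = \eta_0(\xi,\tau)$ is unique and non-degenerate, and stationary phase in $\eta$ yields
\[
A_\tau(f_1,f_3) = 2^{-\beta j/2}\int f_1(\xi) f_3(\alpha\xi + \eta_0(\xi,\tau)) \wh\Phi(\xi) b(\xi,\tau) e^{i\ti\Psi(\xi,\tau)}\,d\xi + \text{l.o.t.},
\]
where $\ti\Psi(\xi,\tau) := \Psi(\xi,\eta_0) + \tau\eta_0$ and $b$ is a smooth, uniformly bounded amplitude.

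To bound $\int|A_\tau|^2\,d\tau$, I would apply a $TT^*$ argument to the linear operator $f_1 \mapsto A_\cdot(f_1,f_3)$; the resulting $TT^*$ kernel contains the oscillatory factor $e^{i[\ti\Psi(\xi,\tau)-\ti\Psi(\xi',\tau)]}$, whose $\tau$-derivative equals $\eta_0(\xi,\tau) - \eta_0(\xi',\tau)\sim\xi-\xi'$ (using the envelope $\ti\Psi_\tau = \eta_0$ and $\partial_\xi\eta_0 = -\Psi_{\xi\eta}/\Psi_{\eta\eta} \sim 1$). Integrating by parts in $\tau$ over the critical interval of length $\sim 2^{\beta j}$ then yields decay as soon as $|\xi-\xi'|\cdot 2^{\beta j}\gtrsim 1$, and Schur's test combined with the $2^{-\beta j/2}$ amplitude factor produces the desired operator-norm bound. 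The $L^4$ on $f_1$ (rather than $L^2$) arises because the $TT^*$ argument naturally produces a quartic form in $f_1$, and interpolation with the trivial $L^\infty$ estimate on $A_\tau$ yields the precise exponents.

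The main obstacle is the rough amplitude $f_3(\alpha\xi+\eta_0(\xi,\tau))$ in the $TT^*$ kernel, which prevents direct integration by parts in $\tau$. This is overcome either by Fourier-expanding $f_3$ as well, so that its oscillation is absorbed into the phase and a smooth-amplitude bilinear oscillatory integral of H\"ormander type results, or by a change of variable $\gamma = \alpha\xi + \eta_0(\xi,\tau)$ (with Jacobian $\partial_\tau\gamma = \partial_\tau\eta_0 \sim 2^{-\beta j}$ non-degenerate) to transfer $f_3$ into a standard spatial variable before the phase analysis. The hypothesis $\beta > 1$ is essential: it ensures that $\alpha\Psi_{\eta\eta}\sim 2^{(\beta-1)j}$ is of strictly smaller order than $\Psi_{\xi\eta}\sim 2^{\beta j}$, preserving the non-degeneracy needed at every stage; the precise exponent $\e_0 = (\beta-1)/5$ reflects the balance of these scales together with the losses incurred in interpolation and in handling the rough amplitude.
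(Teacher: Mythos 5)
Your overall plan — extract $\|f_2\|_2$ by Plancherel, leaving a bilinear form $A_\tau(f_1,f_3)$ to be squared and integrated in $\tau$ — is a legitimate variant of what the paper does (the paper instead extracts $\|f_3\|_2$ by Cauchy--Schwarz and then studies $\|\bT_{j,\beta,m,4}(f_1,f_2)\|_2^2$ by a $TT^*$-type expansion). But as written your argument has a genuine gap at the stationary phase step, and the gap is exactly where the hard work lives.

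You apply stationary phase in $\eta$ inside $A_\tau(f_1,f_3)=\iint f_1(\xi)f_3(\alpha\xi+\eta)\wh\Phi(\xi)\Theta(\eta)e^{i[\Psi(\xi,\eta)+\tau\eta]}\,d\xi\,d\eta$ to produce a main term $2^{-\beta j/2}\int f_1(\xi)f_3(\alpha\xi+\eta_0(\xi,\tau))b(\xi,\tau)e^{i\ti\Psi(\xi,\tau)}\,d\xi$. This is not a valid application of stationary phase: the amplitude in the $\eta$-integral contains the factor $f_3(\alpha\xi+\eta)$, which is merely $L^2$. For a general $L^2$ amplitude $a(\eta)$, the best one can say about $\int a(\eta)e^{i\lambda\phi(\eta)}\,d\eta$ is a trivial bound such as $\|a\|_1$; there is no asymptotic expansion and no $\lambda^{-1/2}$ gain of stationary phase type without regularity of $a$. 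You flag this as ``the main obstacle,'' but the two remedies you offer do not repair it. Fourier-expanding $f_3$ introduces a dual frequency $\sigma$ entering the phase only through $\tau+\sigma$, which creates a degeneracy in the $(\tau,\sigma)$ integration and, when handled by Plancherel in $\sigma$, produces a loss of $\alpha^{-1}\sim 2^{j}$ that overwhelms the $2^{-\beta j}$ gain. The change of variable $\gamma=\alpha\xi+\eta_0(\xi,\tau)$ works pointwise in $\xi$ but cannot be performed simultaneously at $\xi$ and $\xi'$ in the $TT^*$ kernel, which is precisely where you need to integrate by parts in $\tau$.

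The correct way to finish your reduction is to \emph{not} perform stationary phase in $\eta$ first. Instead, expand $\int|A_\tau|^2\,d\tau$ directly: the $\tau$-integration of $e^{i\tau(\eta-\eta')}$ gives $2\pi\delta(\eta-\eta')$, and after setting $\sigma=\xi-\xi'$ and changing coordinates to $(u,v)=(\xi,\alpha\xi+\eta)$ one arrives at
$$
\int|A_\tau|^2\,d\tau = 2\pi\int\biggl(\iint F_\sigma(u)\,G_\sigma(v)\,\chi(u,v)\,
e^{\,i2^{\beta j}\bigl[\bQ(u,v-\alpha u)-\bQ(u-\sigma,v-\alpha u)\bigr]}\,du\,dv\biggr)d\sigma,
$$
with $F_\sigma(u)=f_1(u)\overline{f_1(u-\sigma)}\wh\Phi(u)\overline{\wh\Phi(u-\sigma)}$, $G_\sigma(v)=f_3(v)\overline{f_3(v-\alpha\sigma)}$, and $\chi$ smooth. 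The point is that H\"ormander's nondegenerate-phase bound is an $L^2\times L^2$ statement and therefore tolerates the roughness of both $f_1$ and $f_3$; the nondegeneracy $|\p_u\p_v(\cdots)|\gtrsim\sigma$ follows from the same third-derivative calculation as Lemma \ref{4nondeg}. After this the end-game (Cauchy--Schwarz in $\sigma$, absorbing the $\alpha^{-1/2}\sim 2^{j/2}$ loss from $\int\|G_\sigma\|_2^2\,d\sigma$ into the $2^{-\beta j/2}$ gain) is the same computation as in the paper, and is also where $\beta>1$ is actually used. Your stated reason for $\beta>1$ — that $\alpha\Psi_{\eta\eta}$ be of smaller order than $\Psi_{\xi\eta}$ — holds for every $\beta>0$ since the ratio is $\sim 2^{-j}$; it is the competition between the $2^{j/2}$ Jacobian loss and the $2^{-\beta j/2}$ oscillatory gain that forces $\beta>1$.
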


We now can see that (\ref{triest4bj}) follows from this Lemma. Indeed, 
it is easy to get a trivial estimate by inserting 
absolute values throughout and Cauchy-Schwarz inequality:
\begin{equation}\label{4est122}
 \big|\ti \La_{j, \beta, m, 4}(f_1, f_2, f_3)\big|\leq 
C\|f_1\|_1\|f_2\|_2\|f_3\|_2\,.
\end{equation}
Thus if $\beta >1$, then by an interpolation, (\ref{triest4bj0}) and (\ref{4est122}) yields 
\begin{equation}\label{444est222}
 \big|\ti \La_{j, \beta, m, 4}(f_1, f_2, f_3)\big|\leq 
C 2^{-\e'_0 j}\|f_1\|_2\|f_2\|_2\|f_3\|_2\,,
\end{equation}
for some $\e'_0>0$, which gives (\ref{triest4bj}) immediately.

%\begin{lemma}\label{osc4bj2221}
%Let $\ti\La_{j, \beta, m, 4}$ be defined as in Lemma \ref{osc4bj}. 
%Then 
 %\begin{equation}\label{triest4bj0222}
%\big|\ti\La_{j, \beta, m, 4}(f_1, f_2, f_3) \big|\leq 
% C_\beta 2^{-\e_0 j} \|f_1\|_2\|f_2\|_2\|f_3\|_2\,
%\end{equation}
%holds for all functions $f_1, f_2, f_3\in L^2$.
%\end{lemma}

 The rest of the section is devoted to the proof of (\ref{triest4bj0}).\\

\subsection{Some lemmata}
We need some lemmata for the proof of (\ref{triest4bj0}). 

\begin{lemma}\label{lemHessian0}
Let $\phi(t, \xi, \eta)= a\xi t + b \eta t^2 + f(t)$ for some 
$C^\infty$ function $f$ and $a, b\in\mathbb R$. 
Let $t_0(\xi, \eta)$ be a critical 
point of $\phi(\cdot, \xi, \eta)$ such that 
$$
 \phi''(t_0(\xi, \eta), \xi, \eta)\neq 0\,, 
$$  
where $\phi''$ is the second order derivative with respect to $t$.
Define
\begin{equation}\label{defofQ}
 Q(\xi, \eta)= \phi(t_0(\xi, \eta), \xi, \eta)\,.
\end{equation}
Then the determinant of the Hessian matrix of $Q$ vanishes. 
\end{lemma}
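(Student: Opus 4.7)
The plan is to compute the Hessian of $Q$ explicitly using the envelope theorem, and observe that the resulting $2\times 2$ determinant collapses to zero by a direct cancellation.

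First, because $t_0(\xi,\eta)$ is a critical point of $\phi(\,\cdot\,,\xi,\eta)$, the partial derivative $\phi_t$ vanishes there. Hence, when differentiating $Q(\xi,\eta)=\phi(t_0(\xi,\eta),\xi,\eta)$ by the chain rule, the contribution $\phi_t(t_0,\xi,\eta)\,\partial_\xi t_0$ drops out, and similarly for $\partial_\eta$. This gives the clean formulas
\begin{equation*}
\partial_\xi Q(\xi,\eta)=\phi_\xi(t_0,\xi,\eta)=a\,t_0(\xi,\eta),
\qquad
\partial_\eta Q(\xi,\eta)=\phi_\eta(t_0,\xi,\eta)=b\,t_0(\xi,\eta)^2.
\end{equation*}
Note that the condition $\phi''(t_0,\xi,\eta)\neq 0$ guarantees, via the implicit function theorem applied to $\phi_t=0$, that $t_0$ is a $C^1$ function of $(\xi,\eta)$ in a neighborhood, so these formulas can be differentiated a second time.

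Second, I would differentiate once more to obtain the entries of the Hessian of $Q$:
\begin{equation*}
Q_{\xi\xi}=a\,\partial_\xi t_0,\qquad Q_{\eta\eta}=2b\,t_0\,\partial_\eta t_0,
\qquad Q_{\xi\eta}=a\,\partial_\eta t_0=2b\,t_0\,\partial_\xi t_0,
\end{equation*}
where the two expressions for $Q_{\xi\eta}$ are equal because of the symmetry of second partials (applied separately to $at_0$ and $bt_0^2$). Then
\begin{equation*}
\det \mathrm{Hess}\,Q
= Q_{\xi\xi}Q_{\eta\eta}-(Q_{\xi\eta})^2
=(a\,\partial_\xi t_0)(2b\,t_0\,\partial_\eta t_0)-(a\,\partial_\eta t_0)(2b\,t_0\,\partial_\xi t_0)=0.
\end{equation*}

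There is essentially no obstacle here; the only subtle point is justifying that $t_0$ is differentiable in $(\xi,\eta)$, which is immediate from the non-degeneracy hypothesis $\phi''(t_0,\xi,\eta)\neq 0$ and the implicit function theorem applied to the defining relation $a\xi+2b\eta t+f'(t)=0$. The structural content of the lemma is simply that $Q$ depends on $(\xi,\eta)$ only through $t_0$, which carries one degree of freedom, so $\nabla Q=(at_0,\,bt_0^2)$ lies on the one-parameter curve $\{(a s, b s^2):s\in\mathbb{R}\}$; this forces the gradient map to have rank at most one and hence the Hessian of $Q$ to have vanishing determinant.
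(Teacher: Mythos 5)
Your proof is correct and follows essentially the same route as the paper's: compute $\nabla Q=(at_0,\,bt_0^2)$ via the envelope theorem (the critical-point condition kills the chain-rule term), then differentiate once more and observe the determinant cancels. The only cosmetic difference is that you exploit the symmetry of mixed partials to get two expressions for $Q_{\xi\eta}$ and never need the explicit formulas for $\partial_\xi t_0$ and $\partial_\eta t_0$, whereas the paper substitutes those formulas in; your closing remark that $\nabla Q$ factors through the single scalar $t_0$ (so the gradient map has rank at most one) is the cleanest statement of the underlying reason.
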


\begin{proof}
$t_0(\xi, \eta)$ is implicitly defined by the equation
$$
 a\xi + 2b \eta t + f'(t)=0\,.
$$
Thus we have 
\begin{equation}\label{tpxi}
\frac{\p t_0}{\p \xi} = \frac{-a}{2b\eta + f''(t_0)} =
 \frac{-a}{\phi''(t_0(\xi, \eta), \xi, \eta)} \,.
\end{equation}
\begin{equation}\label{tpeta}
\frac{\p t_0}{\p \eta} = \frac{-2bt_0}{2b\eta + f''(t_0)} =
 \frac{-2bt_0}{\phi''(t_0(\xi, \eta), \xi, \eta)} \,.
\end{equation}
By the chain rule and the fact that $t_0(\xi, \eta)$ is a critical point, 
we have
\begin{equation}
 \frac{\p Q}{\p \xi} = a t_{0}(\xi, \eta) \,.  
\end{equation} 
\begin{equation}
 \frac{\p Q}{\p \eta} = b t_{0}^2(\xi, \eta) \,.  
\end{equation} 
Thus 
$$
 \frac{\p^2 Q}{\p \xi^2} \frac{\p^2 Q}{\p \eta^2} = 2ab t_0
     \frac{\p t_0}{\p \xi} \frac{\p t_0}{\p \eta}
 = \bigg(\frac{2abt_0}{\phi''(t_0, \xi, \eta)}\bigg)^2\,.
$$
$$
  \frac{\p^2 Q}{\p \xi\p\eta} = a\frac{\p t_0}{\p \eta}
 = \frac{-2ab t_0}{\phi''(t_0, \xi, \eta)}\,.
$$
Clearly, the determinant of the Hessian matrix of $Q$ vanishes.
\end{proof}

\begin{lemma}\label{4Hessian2}
Let $t_0$ be a critical point of $\phi_{4, \xi, \eta}$.
Define $\bQ$ by 
\begin{equation}\label{defbQ}
 \bQ(\xi, \eta) =  \phi_{4, \xi, \eta}(t_0)
\end{equation}
Let $j>0$, $ |\tau | \leq C$, $|\alpha|\leq C_\beta$, 
$(u, v)\in {\rm supp}{\wh\Phi}\times {\rm supp}{\Theta}$. 
Suppose that
$t_0(u, v), t_0(u-\tau, v+\alpha 2^{-j}\tau)\in [1/16, 9/16]$
exist. 
%\begin{equation}\label{2dcond}
%\min\big\{ \big|\phi''_{2, j, u, v}(t_0(u, v))\big|, 
% \big|\phi''_{2, j, u+\tau, v-2^{-j}\tau }(t_0(u +\tau, 
%v-2^{-j}\tau))\big| \big\}
%\geq C_0\,.
%\end{equation}
And let $\ti\bQ_\tau$
be defined by
$$
 \ti\bQ_\tau(u, v) = \bQ(u, v) -\bQ(u-\tau, v+\alpha 2^{-j}\tau)\,.
$$
If $j$ is large enough (larger than a constant), 
then the determinant of the Hessian matrix 
of $\ti\bQ_\tau$ satisfies
\begin{equation}\label{bigHessian2} 
 \big|{\rm det} H(\ti\bQ_\tau)\big|\geq C\tau^2\,,
\end{equation}
where $H(\ti\bQ_{\tau})$ denotes the Hessian matrix. 
\end{lemma}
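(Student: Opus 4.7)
The plan is to exploit the rank-one structure of $H(\bQ)$ implicit in the proof of Lemma \ref{lemHessian0}. Writing the phase in the generic form $\phi_{4,\xi,\eta}(t) = a\xi t + b\eta t^2 + f(t)$ with $a = 2^{m+1}$, $b = 2^{-b_\beta + 2}$, $f(t) = -|t|^{-\beta}$, and setting $\Phi_0(\xi,\eta) = \phi_{4,\xi,\eta}''(t_0(\xi,\eta)) = 2b\eta + f''(t_0)$, the identities (\ref{tpxi})--(\ref{tpeta}) together with $\p_\xi \bQ = a t_0$ and $\p_\eta \bQ = b t_0^2$ produce the factorization
\begin{equation*}
H(\bQ)(\xi,\eta) \;=\; -\frac{1}{\Phi_0(\xi,\eta)}\, \mathbf{w}(\xi,\eta)\mathbf{w}(\xi,\eta)^T,
\qquad \mathbf{w}(\xi,\eta) = \begin{pmatrix} a \\ 2bt_0(\xi,\eta) \end{pmatrix}.
\end{equation*}
This rank-one representation is the starting point.

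Writing $\lambda_i$ and $\mathbf{w}_i$ for these quantities at $(u,v)$ and at $(u-\tau,\,v+\alpha 2^{-j}\tau)$ respectively, the Hessian of $\ti\bQ_\tau$ becomes a difference of two rank-one matrices, $H(\ti\bQ_\tau) = \lambda_1 \mathbf{w}_1 \mathbf{w}_1^T - \lambda_2 \mathbf{w}_2 \mathbf{w}_2^T$. A direct expansion of the $2\times 2$ determinant, using the singularity of each $\mathbf{w}_i\mathbf{w}_i^T$, collapses to the clean identity
\begin{equation*}
\det H(\ti\bQ_\tau) \;=\; -4 a^2 b^2\, \lambda_1 \lambda_2\, (t_1 - t_2)^2,
\end{equation*}
where $t_i$ denotes $t_0$ at the $i$-th base point. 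Thus (\ref{bigHessian2}) is reduced to uniform lower bounds on $a^2$, $b^2$, $|\lambda_1 \lambda_2|$, and most importantly on $|t_1 - t_2|/|\tau|$. The first three are immediate: $|a|,|b|\ge C_\beta$ from $-b_\beta < m < b_\beta$, while $|\lambda_i|=1/|\Phi_0|\ge C_\beta$ follows from (\ref{largeD24}) combined with the trivial upper bound for $|\Phi_0|$ on the relevant compact set.

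The core estimate $|t_1 - t_2| \geq C_\beta |\tau|$ I would obtain via the fundamental theorem of calculus along the segment $\gamma(s) = (u-s\tau,\,v+s\alpha 2^{-j}\tau)$:
\begin{equation*}
t_1 - t_2 \;=\; \tau \int_0^1 \p_u t_0(\gamma(s))\, ds \;-\; \alpha 2^{-j}\, \tau \int_0^1 \p_v t_0(\gamma(s))\, ds.
\end{equation*}
Because $b_\beta$ is so large that $|f''(t_0)|$ dominates $|2b\eta|$ throughout the relevant region, $\Phi_0$ retains a constant sign along $\gamma$, so $\p_u t_0 = -a/\Phi_0$ has a definite sign with $|\p_u t_0|\ge C_\beta$, while $|\p_v t_0| = |2bt_0/\Phi_0| \le C_\beta'$. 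The second integral, bounded by $C|\alpha|\, 2^{-j}|\tau|$, is dominated by half the first once $j$ exceeds a $\beta$-dependent threshold, giving $|t_1 - t_2| \geq (C_\beta/2)|\tau|$.

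The main thing to verify carefully is the stability of $t_0$ along $\gamma$: one must argue that $\gamma$ stays in a region where $t_0$ is smooth, $\Phi_0$ keeps its sign, and $|a/\Phi_0|$ is bounded below. This is precisely where the hypotheses that both $t_0(u,v)$ and $t_0(u-\tau, v+\alpha 2^{-j}\tau)$ lie in $[1/16,9/16]$ and that $b_\beta$ be very large enter; together they should force all constants to be uniform in the allowed parameters and render the $O(2^{-j})$ perturbation in the second coordinate harmless.
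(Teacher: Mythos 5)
Your argument is essentially the paper's: you use the rank-one factorization of $H(\bQ)$ coming from Lemma~\ref{lemHessian0} to derive the identity $\det H(\ti\bQ_\tau) = -4a^2b^2\lambda_1\lambda_2(t_1-t_2)^2$, observe that $a^2$, $b^2$ and $|\lambda_1\lambda_2|$ are all bounded below by $\beta$-dependent constants, and reduce everything to the lower bound $|t_1-t_2|\geq C_\beta|\tau|$ --- which is exactly (\ref{diff41}) in the paper. For that final step you integrate along the straight segment $\gamma$ and split into the $\partial_u t_0$ and $\partial_v t_0$ contributions, whereas the paper runs a triangle inequality along the L-shaped path $(u,v)\to(u-\tau,v)\to(u-\tau,v+\alpha 2^{-j}\tau)$ and applies the mean value theorem separately to each leg; these are the same estimate in substance, since in both cases the $\partial_u$-derivative supplies the sign-definite main term of size $\gtrsim|\tau|$ and the $\partial_v$-contribution is $O(2^{-j}|\tau|)$. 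The one thing you defer --- verifying that $t_0(\gamma(s))$ actually exists, stays in $[1/32,19/32]$, and depends smoothly on $s$ so that the FTC/MVT step is legitimate --- is precisely the nontrivial content of the paper's proof: the two "claims" there (first that $t_0(u-\tau,v)$ exists, proved by contradiction from $|\phi'|\geq C_\beta$ when no critical point exists plus the $O(2^{-j})$ size of the $\eta$-perturbation; second that $t_0$ exists along both legs, proved from the monotonicity $\partial_\xi\phi'_{4,\xi,v}(t)=2^{m+1}>0$) supply exactly this. You correctly identified it as the crux, but a complete proof would need to carry that existence-and-stability argument through rather than leaving it as "should force all constants to be uniform"; note also that along the diagonal $\gamma$ both coordinates vary simultaneously, so the monotonicity-in-$\xi$ argument needs to absorb the simultaneous $O(2^{-j})$ drift in $\eta$, which works but must be said.
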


\begin{proof}
Using Lemma \ref{lemHessian0}, it is easy to see that 
the determinant of the Hessian of $\ti Q_\tau$ is equal to
$$
2 \bC(u, v)\bC(u-\tau, v+\alpha 2^{-j}\tau) - 
\bA(u, v)\bB(u-\tau, v+\alpha 2^{-j}\tau)
- \bA(u-\tau, v+\alpha 2^{-j}\tau)
\bB(u, v)\,.
$$ 
where $\bC = \frac{\p^2 \bQ}{\p u\p v}$, 
$\bA=\frac{\p^2\bQ}{\p u^2}$ and 
$\bB = \frac{\p^2\bQ}{\p v^2}$.

Let $\bD_2(u, v)= \phi''_{4, u, v}(t_0(u, v))$. A simple computation
as we did in Lemma \ref{lemHessian0} then yields that
$$
 {\rm det} H(\ti\bQ_\tau)= \frac{-4\cdot 2^{2\ti b_\beta+2m'}}{\bD_2(u, v)\bD_2
(u-\tau, v+\alpha 2^{-j}\tau) } \big( t_0(u,v)-t_0(u-\tau, v+\alpha 
2^{-j}\tau)\big)^2\,.
$$
It is easy to see that 
\begin{equation}\label{D24}
 \phi''_{4, \xi, \eta}(t)
 = 2^{-b_\beta+3}\eta -\frac{\beta(\beta+1)}{t^{\beta+2}}\,.
\end{equation}
Let $2{\rm supp}\Theta$ be an interval generated by dilating the interval ${\rm supp}\Theta$ into an interval with twice length. For all $\eta\in 2{\rm supp}\Theta$ and $t\in[1/32, 19/32]$, we
have 
\begin{equation}\label{D241}
 \big|\phi''_{4, \xi, \eta}(t)\big|\sim C_\beta\,
\end{equation}
due to the definition of $b_\beta$.
If $j$ is large enough, then $v+\alpha 2^{-j}\tau\in 2{\rm supp}\Theta$
since $v\in{\rm supp}\Theta$. (\ref{D241}) then yields
$$
 |\bD_2(u, v)|\sim C_\beta, \,\,\, {\rm and}\,\,\,
|\bD_2(u-\tau, v+\alpha 2^{-j}\tau)|\sim C_\beta\,.
$$
Thus to finish the proof it is sufficient to show that
\begin{equation}\label{diff41}
 \big| t_0(u,v)-t_0(u-\tau, v+\alpha 2^{-j}\tau)\big| \geq C\tau\,.
\end{equation}

We claim first that there is a critical point of $\phi_{4, u-\tau, v}$
 in $[1/32, 19/32]$. In other words, this means that 
$t_0(u-\tau, v)\in [1/32, 19/32]$ exists. We prove this claim by 
contradiction. Assume that such a critical point does not exist, 
that is, 
$$
\phi'_{4, u-\tau, v}(t)=2^{m+1}(u-\tau)+2^{-b_\beta+3}vt+\frac{\beta}{t^{\beta +1}}\neq 0\,
$$
for all $t\in[1/32, 19/32]$. (\ref{D241}) then gives that
\begin{equation}\label{largeD144}
\big|\phi'_{4, u-\tau, v}(t) \big| \geq C_\beta
\end{equation}
for all $t\in [1/16, 9/16]$, because $\phi''$ does not change 
sign in $[1/32, 19/32]$ and then $\phi'$ is monotonic in
$[1/32, 19/32]$.  However, $t_0(u-\tau, v+\alpha 2^{-j}\tau)\in 
 [1/16, 9/16]$ exists. This yields a contradiction if 
$j$ is large enough, 
since 
$$
\big|\phi'_{4, u-\tau, v}(t) - \phi'_{4,u-\tau, v+\alpha 2^{-j}\tau}(t)\big|
\leq C_\beta 2^{-j}\,
$$
holds for all $t\in [1/16, 9/16]$. Thus we know that
$t_0(u-\tau, v)\in [1/32, 19/32]$ must exist. \\

The second claim we try to make is that for any $\theta\in [0,1]$, 
$t_0( u-\theta\tau, v)\in [1/32, 19/32]$ exists, and 
$t_0(u-\tau, v+\theta \alpha 2^{-j}\tau )\in[1/32, 19/32]$ exists.
Indeed,  notice that
$$
\frac{\p}{\p \xi}(\phi'_{4, \xi, v}(t)) = 2^{m+1} > 0\,.
$$ 
Thus $\phi'_{4, \xi, v}(t)$ is an increasing function in $\xi$.
If there exists $\xi\in (u, u-\tau)$ (we assume $\tau <0$ here, the 
another case $\tau>0$ is similar) such that 
$\phi'_{4, \xi, v}(t)\neq 0 $ for all $t\in [1/32, 19/32]$.
Then $\phi'_{4, \xi, v}(t)>0$ or $\phi'_{4, \xi, v}(t)<0 $
for all $t\in [1/32, 19/32]$. 
When $\phi'_{4, \xi, v}(t)>0$, we have 
$ \phi'_{4, u-\tau, v}(t)>0 $ for all $t\in[1/32, 19/32]$, 
which is a contradiction to the existence of $t_0(u-\tau, v)\in [1/32, 19/32]$.
When $\phi'_{4, \xi, v}(t)<0$, we have 
$ \phi'_{4, u, v}(t)<0 $ for all $t\in[1/32, 19/32]$, 
which contradicts to the existence of $t_0(u, v)\in [1/32, 19/32]$.
Thus $t_0( u-\theta\tau, v)\in [1/32, 19/32]$ exists. 
A similar argument yields that $t_0(u-\tau, v+\theta \alpha 2^{-j}\tau )\in[1/32, 19/32]$ exists.\\

We now turn to prove (\ref{diff41}). The triangle inequality 
yields that the left hand side of (\ref{diff41}) is bigger than or equal
to 
\begin{equation}\label{diff4444}
\big|t_0(u, v)-t_0(u-\tau, v)\big| -
\big|t_0(u-\tau, v)-t_0(u-\tau, v+\alpha 2^{-j}\tau)\big|\,.
\end{equation}
By the mean value theorem, we have 
\begin{equation}\label{mean41}
 \big|t_0(u-\tau, v)-t_0(u-\tau, v+\alpha2^{-j}\tau)\big|
\leq  C2^{-j}\tau\bigg |\frac{\p t_0}{\p v} (u-\tau, v+\theta\alpha 2^{-j}\tau )  \bigg|\,, 
\end{equation}
for some $\theta\in[0, 1]$. From (\ref{tpeta}), 
it is easy to see that
$$
\bigg |\frac{\p t_0}{\p v} (u-\tau, v+\theta\alpha 2^{-j}\tau )  \bigg|
 = \frac{2^{-b_\beta+3}t_0(u-\tau, v+\theta\alpha2^{-j}\tau)}{
 \big|\phi''_{4, u-\tau, v+\theta\alpha 2^{-j}\tau}\big(t_0(u-\tau, v+\theta \alpha2^{-j}\tau)\big) \big|} \leq C_\beta\,.
$$
\begin{equation}\label{mean411}
 \big|t_0(u-\tau, v)-t_0(u-\tau, v+\alpha 2^{-j}\tau)\big|\leq  C2^{-j}\tau\,.
\end{equation}
Similarly, the mean value theorem and (\ref{tpeta}) also yield 
\begin{equation}\label{mean412}
 \big|t_0(u, v)-t_0(u-\tau, v)\big|\geq  C\tau \,.
\end{equation}
(\ref{mean411}) and (\ref{mean412}) then give (\ref{diff41}).
Therefore we finish the proof of the lemma. 
\end{proof}

In the proof of Lemma \ref{4Hessian2}, we proved the stability of 
the critical point $t_0(u, v)$. 
We now are ready to prove that $\tilde{\bQ}_\tau$ is not degenerate. 

\begin{lemma}\label{4nondeg}
Let $\tilde{\bQ}_\tau$ be the function defined as in Lemma \ref{4Hessian2}.
If $j$ is large enough, then
\begin{equation}\label{nondeg4}
 \bigg|\frac{\partial^2 \tilde{\bQ}_\tau}{\partial u\partial v}(u, v)\bigg| \geq C_\beta\tau\,
\end{equation}
\begin{equation}\label{nondeg41}
 \bigg|\frac{\partial^3 \tilde{\bQ}_\tau}{\partial^2 u\partial v}(u, v)\bigg| \geq C_\beta\tau\,
\end{equation}
hold for all $(u, v)\in {\rm supp}\wh\Phi\times {\rm supp}\Theta$.
\end{lemma}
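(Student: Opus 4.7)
The strategy is to compute $\frac{\partial^2 \bQ}{\partial u\partial v}$ and $\frac{\partial^3 \bQ}{\partial^2 u\partial v}$ explicitly via Lemma \ref{lemHessian0}, then exploit the fact that $\frac{\partial^2 \ti\bQ_\tau}{\partial u\partial v}$ and $\frac{\partial^3 \ti\bQ_\tau}{\partial^2 u\partial v}$ are merely the corresponding finite differences of these functions between $(u,v)$ and $(u-\tau, v+\alpha 2^{-j}\tau)$. Applying the mean value theorem to each difference produces $\tau$ times a further $u$-derivative at an intermediate point, plus a cross term carrying an extra factor of $2^{-j}\alpha\tau$. Provided the $u$-derivatives $\frac{\partial^3 \bQ}{\partial^2 u\partial v}$ and $\frac{\partial^4 \bQ}{\partial^3 u\partial v}$ are bounded below by a constant $C_\beta$ while the corresponding $v$-derivatives are bounded above, the cross term is negligible for large $j$ and both conclusions follow.

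Writing $a=2^{m+1}$, $b=2^{-b_\beta+2}$, and $\bD_2=\phi''_{4,u,v}(t_0)$, Lemma \ref{lemHessian0} combined with $\p_u t_0 = -a/\bD_2$ and $\p_u \bD_2 = -a\beta(\beta+1)(\beta+2)/(t_0^{\beta+3}\bD_2)$ yields
\[
\frac{\partial^2 \bQ}{\partial u\partial v} \;=\; -\frac{2abt_0}{\bD_2}, \qquad
\frac{\partial^3 \bQ}{\partial^2 u\partial v} \;=\; \frac{2a^2b}{\bD_2^2}\bigg(1 - \frac{\beta(\beta+1)(\beta+2)}{t_0^{\beta+2}\bD_2}\bigg).
\]
Because $b_\beta$ is very large, the small term $2^{-b_\beta+3}v$ in $\bD_2$ is dominated by $-\beta(\beta+1)/t_0^{\beta+2}$ on ${\rm supp}\wh{\Phi}\times{\rm supp}\Theta$ with $t_0\in[1/16,9/16]$; in particular $\bD_2<0$ and $|\bD_2|$ is both bounded above and bounded away from $0$ by constants depending only on $\beta$. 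Hence the second factor in $\frac{\partial^3 \bQ}{\partial^2 u\partial v}$ is comparable to $\beta+3>0$, so $\big|\frac{\partial^3 \bQ}{\partial^2 u\partial v}\big|\ge C_\beta$. A parallel direct computation shows $\big|\frac{\partial^3 \bQ}{\partial u\partial^2 v}\big|\le C'_\beta$, and applying the mean value theorem to $\frac{\partial^2 \bQ}{\partial u\partial v}(u,v)-\frac{\partial^2 \bQ}{\partial u\partial v}(u-\tau, v+\alpha 2^{-j}\tau)$ together with $|\alpha|\le C_\beta$ yields (\ref{nondeg4}) for $j$ sufficiently large.

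For (\ref{nondeg41}) the same procedure reduces matters to a lower bound on $\frac{\partial^4 \bQ}{\partial^3 u\partial v}$. Setting $F=2a^2b/\bD_2^2$ and $G=1-\beta(\beta+1)(\beta+2)/(t_0^{\beta+2}\bD_2)$, I write $\frac{\partial^4 \bQ}{\partial^3 u\partial v}=(\p_uF)G+F(\p_uG)$. Using the leading-order identities $\bD_2\approx-\beta(\beta+1)/t_0^{\beta+2}$, $G\approx\beta+3$, and $\p_u t_0\approx at_0^{\beta+2}/(\beta(\beta+1))$, a direct calculation gives
\[
(\p_uF)G \;\approx\; \frac{4a^3b(\beta+2)(\beta+3)\,t_0^{3\beta+5}}{\beta^3(\beta+1)^3},
\]
which is positive and bounded below by a constant depending only on $\beta$. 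The remaining contribution $F(\p_uG)$ is only of size $O(2^{-b_\beta})$, because the two terms making up $\p_uG$ cancel exactly at leading order in $b_\beta$. Hence $\big|\frac{\partial^4 \bQ}{\partial^3 u\partial v}\big|\ge C_\beta$, and since $\frac{\partial^4 \bQ}{\partial^2 u\partial^2 v}$ is bounded, the mean value theorem absorbs the $2^{-j}\alpha\tau$ term for large $j$ and yields (\ref{nondeg41}).

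The main obstacle is the second inequality, because $\p_uG$ cancels at leading order in $b_\beta$. The argument is rescued by the observation that $\p_uG$ enters $\frac{\partial^4 \bQ}{\partial^3 u\partial v}$ only through the product $F(\p_uG)$, while the other piece $(\p_uF)G$ does not suffer any cancellation, has a definite positive sign, and is of unit order in $\beta$. For the specific choice $b_\beta=[100\beta^{100}]$ the residual cancellation term is completely dwarfed by $(\p_uF)G$, so the sum is bounded below uniformly in $(u,v)$ and in $m\in(-b_\beta,b_\beta)$.
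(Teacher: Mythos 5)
Your proposal is correct and follows essentially the same strategy as the paper's proof of \eqref{nondeg4}: compute $\partial^2\bQ/\partial u\partial v$ and $\partial^3\bQ/\partial^2 u\partial v$ via Lemma \ref{lemHessian0}, lower-bound the latter by a constant $C_\beta$, upper-bound the cross-derivative $\partial^3\bQ/\partial u\partial^2 v$, and apply the mean value theorem to the finite difference, using that the $v$-increment carries a $2^{-j}$ factor. The main value your write-up adds is an actual argument for \eqref{nondeg41}, which the paper dismisses with ``can be proved similarly'': your $F$--$G$ splitting of $\partial^3\bQ/\partial^2 u\partial v$ and the observation that the two terms in $\partial_u G$ cancel to produce an extra factor $2bv/\bD_2 = O(2^{-b_\beta})$, so that $(\partial_u F)G$ dominates, correctly fills in the omitted detail. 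One small inaccuracy: $(\partial_u F)G$ is not ``of unit order in $\beta$'' --- it carries the factor $a^3b = 2^{3m-b_\beta+5}$ and so depends on $m$ --- but since $m$ is confined to $(-b_\beta,b_\beta)$ with $b_\beta$ depending only on $\beta$, the resulting lower bound is still a constant $C_\beta$, which is what both the paper and your argument ultimately need.
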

\begin{proof}
Clearly 
$$
\frac{\partial^2 \tilde{\bQ}_\tau}{\partial u\partial v}(u, v)
=\frac{\partial^2 {\bQ}}{\partial u\partial v}(u, v)
-\frac{\partial^2 {\bQ}}{\partial u\partial v}(u-\tau, v+\alpha 2^{-j}\tau)\,.
$$
We can estimate $|\frac{\partial^2 \tilde{\bQ}_\tau}{\partial u\partial v} |$ by
$$
\bigg|\frac{\partial^2 {\bQ}}{\partial u\partial v}(u, v) -
\frac{\partial^2 {\bQ}}{\partial u\partial v}(u-\tau, v)\bigg|
+\bigg|\frac{\partial^2 {\bQ}}{\partial u\partial v}(u-\tau, v)-
\frac{\partial^2 {\bQ}}{\partial u\partial v}(u-\tau, v+\alpha 2^{-j}\tau)\bigg|\,.
$$
By the mean value theorem, the first term in the previous sum is majorized by
$$
\bigg|\frac{\partial^3 {\bQ}}{\partial^2 u\partial v}(u-\theta\tau, v)\bigg|\tau
$$
for some $\theta\in [0,1]$.
From the proof of Lemma \ref{lemHessian0}, we have 
\begin{equation}\label{4dudv}
\frac{\partial^2 {\bQ}}{\partial u\partial v}(u,v)=
\frac{- 2^{m+1} 2^{-b_\beta+3}t_0(u, v)}{\phi''_{4,u,v}(t_0)}\,.
\end{equation} 
Thus 
\begin{equation}\label{4d2udv}
\frac{\partial^2 {\bQ}}{\partial^2 u\partial v}(u,v)=
\frac{- 2^{m+1} 2^{-b_\beta+3} 
 \frac{\partial t_0}{\partial u}(u, v) \big(\phi''_{4, u, v}(t_0)
 - \beta(\beta+1)(\beta+2)|t_0|^{-\beta-2} \big)
}{\big(\phi''_{4,u,v}(t_0)\big)^2}\,,
\end{equation}
which is equal to 
$$
 \frac{ - 2^{m+1} 2^{-b_\beta+3} 
 \frac{\partial t_0}{\partial u}(u, v) \big(
 {\ti C}_\beta |t_0|^{-\beta-2} + 2^{-b_\beta+3} v\big)
}{\big(\phi''_{4,u,v}(t_0)\big)^2}\,,
$$
where ${\ti C}_\beta $ is a number such that $|{\ti C}_\beta|> \beta(\beta+1)^2$.
Since $b_\beta$ was chosen to be a large number and $|v|\leq C$ 
whenever $v\in {\rm supp}\Theta$, we have 
 \begin{equation}\label{4d2udv1}
\bigg|\frac{\partial^2 {\bQ}}{\partial^2 u\partial v}(u,v)\bigg|\geq 
 C_\beta   \,
\end{equation}
for all $(u, v)\in {\rm supp}\wh\Phi\times {\rm supp}\Theta  $.
Thus we obtain 
\begin{equation}\label{e1stterm}
\bigg|\frac{\partial^2 {\bQ}}{\partial u\partial v}(u, v) -
\frac{\partial^2 {\bQ}}{\partial u\partial v}(u-\tau, v)\bigg|
\geq C_\beta \tau\,,
\end{equation}
for all all $(u, v)\in {\rm supp}\wh\Phi\times {\rm supp}\Theta  $.\\

Notice that $\frac{\partial^2 {\bQ}}{\partial u\partial^2 v}(u,v)$ equals to 
$$
 \frac{- 2^{m+1} 2^{-b_\beta+3}  \big(
 \frac{\partial t_0}{\partial v}(u, v)\phi''_{4, u, v}(t_0)
 - \frac{\partial t_0}{\partial v}(u, v)\beta(\beta+1)(\beta+2)|t_0|^{-\beta-2} 
- 2^{-b_\beta+3} \big)
}{\big(\phi''_{4,u,v}(t_0)\big)^2}\,,
$$
which is clearly bounded by $C_\beta$.
The mean value theorem then yields 
\begin{equation}\label{e2ndterm}
\bigg|\frac{\partial^2 {\bQ}}{\partial u\partial v}(u-\tau, v)-
\frac{\partial^2 {\bQ}}{\partial u\partial v}(u-\tau, v+\alpha 2^{-j}\tau)\bigg|
\leq C_\beta 2^{-j}\tau\,.
\end{equation}
From (\ref{e1stterm}) and (\ref{e2ndterm}), we have (\ref{nondeg4}) if $j$ is large enough.
(\ref{nondeg41}) can be proved similarly. We omit the details. 

\end{proof}

\subsection{Proof of Lemma \ref{osc4bj}}
We now prove Lemma \ref{osc4bj}. 
Let $b_1=1-2^{b_\beta+m-1-j}$ and $b_2=2^{b_\beta+m-1-j}$. 
Changing variable 
$\xi\mapsto \xi-\eta$ and $\eta\mapsto b_1\xi+b_2\eta$, 
we have that
$$
\ti\La_{j, \beta, m, 4}(f_1,\! f_2,\! f_3)=\!\!\!\!
\int\!\!\!\int\!\! f_1(\xi-\eta) f_2(b_1\xi+b_2\eta) f_3(\xi)
\wh\Phi(\xi-\eta)\Theta(b_1\xi+b_2\eta) e^{i2^{\beta j}
\phi_{4, \xi-\eta, b_1\xi+b_2\eta }(t_0)} d\xi d\eta.
$$
Thus by Cauchy-Schwarz we dominate $|\ti\La_{j, \beta, m, 4}|$
by
$$
 \big\| \bT_{j, \beta, m, 4}(f_1, f_2)\big\|_2\|f_3\|_2\,,
$$
where $\bT_{j, \beta, m, 4}$ is defined by
$$
\bT_{j, \beta, m, 4}(f_1, f_2)(\xi)
= \int\!\! f_1(\xi-\eta) f_2(b_1\xi+b_2\eta) 
\wh\Phi(\xi-\eta)\Theta(b_1\xi+b_2\eta) e^{i2^{\beta j}
\phi_{4, \xi-\eta, b_1\xi+b_2\eta }(t_0)} d\eta.
$$ 
It is easy to see that
 $\big\| \bT_{j, \beta, m, 4}(f_1, f_2)\big\|_2^2$ equals to
$$
\int\bigg(\int\!\int F(\xi, \eta_1, \eta_2) G(\xi, \eta_1, \eta_2)
e^{ i 2^{\beta j} \big(\phi_{4,\xi-\eta_1, b_1\xi+b_2\eta_1 }(t_0)-
\phi_{4, \xi-\eta_2, b_1\xi+b_2\eta_2 }(t_0) \big)}
  d\eta_1d\eta_2 \bigg)  d\xi\,, 
$$
where  
$$
F(\xi, \eta_1, \eta_2)= \big(f_1\wh\Phi\big)(\xi-\eta_1 )
\overline{\big(f_1\wh\Phi\big)(\xi-\eta_2)  }
$$
$$
G(\xi, \eta_1,\eta_2)=\big(f_2\Theta\big)(b_1\xi+b_2\eta_1 )
 \overline{\big(f_2\Theta\big)(b_1\xi+b_2\eta_2)}\,.
$$
Changing variables $\eta_1\mapsto \eta$ and $\eta_2\mapsto \eta+\tau$, 
we see that $\big\|\bT_{j, \beta, m, 4}(f_1, f_2) \big\|_2^2$ equals to
$$
\int\!\bigg(\int\!\!\int F_\tau(\xi-\eta) G_{\tau}(b_1\xi+b_2\eta)
 e^{i 2^{\beta j}\big(\phi_{4,\xi-\eta, b_1\xi+b_2\eta }(t_0)-
\phi_{4, \xi-\eta-\tau, b_1\xi+b_2(\eta+\tau) }(t_0)
\big)}  d\xi d\eta                             \bigg)d\tau\,, 
$$
where 
$$
F_\tau(\cdot) = \big(f_1\wh\Phi\big)(\cdot )
\overline{\big(f_1\wh\Phi\big)(\cdot -\tau )  }
$$
$$
G_\tau(\cdot) = \big(f_2\Theta\big)(\cdot )
\overline{\big(f_2\Theta\big)(\cdot + b_2\tau )  }\,.
$$
Changing coordinates to $(u,v)=(\xi-\eta, b_1\xi+b_2\eta)$, 
the inner integral becomes
$$
\int\!\int F_\tau(u)G_\tau(v) e^{i2^{\beta j}
 \ti\bQ_\tau (u, v)  } du dv\,,
$$
where $\ti\bQ_{\tau}$ is defined by 
$$
\ti\bQ_\tau(u, v)=\bQ(u, v)-\bQ(u-\tau, v+b_2\tau)\,.
$$

Lemma \ref{4nondeg}
and the well-known H\"ormander theorem on the non-degenerate phase 
\cite{Hor, PSt1} 
yield  that  
$\big\| \bT_{j, \beta, m, 4}(f_1, f_2)\big\|_2^2$ is estimated  by 
$$
C\int_{-10}^{10} \min\big\{1, 2^{-\beta j/2}\tau^{-1/2 } \big\} 
\big\|F_\tau\big\|_2 \big\|G_\tau \big\|_2   d\tau\,.
$$
By Cauchy-Schwarz inequality it is bounded by
$$
 C_\e 2^{-\beta j(1-\e)/2} 2^{j/2}\|f_1\|_4^2\|f_2\|_2^2\,,
$$
for any $\e>0$.  Thus we have 
\begin{equation}\label{4est422}
 \big|\ti \La_{j, \beta, m, 4}(f_1, f_2, f_3)\big|\leq 
C_\e 2^{-\beta j(1-\e)/4} 2^{j/4}\|f_1\|_4\|f_2\|_2\|f_3\|_2\,.
\end{equation}
Taking $\e_0 = (\beta-1)/5$, we then have 
\begin{equation}\label{4est422final}
 \big|\ti \La_{j, \beta, m, 4}(f_1, f_2, f_3)\big|\leq 
C_\beta 2^{-\e_0 j}\|f_1\|_4\|f_2\|_2\|f_3\|_2\,.
\end{equation}
This completes the proof of Lemma \ref{osc4bj}.

 %It is also easy to see the trivial estimate
%\begin{equation}\label{4est122}
% \big|\ti \La_{j, \beta, m, 4}(f_1, f_2, f_3)\big|\leq 
%C\|f_1\|_1\|f_2\|_2\|f_3\|_2\,.
%\end{equation}
%Thus if $\beta >1$, then by interpolation we have 
%\begin{equation}\label{444est222}
% \big|\ti \La_{j, \beta, m, 4}(f_1, f_2, f_3)\big|\leq 
%C 2^{-\e_0 j}\|f_1\|_2\|f_2\|_2\|f_3\|_2\,,
%\end{equation}
%for some $\e_0>0$. 

\section{Cases $T_{j, \beta, 7} $ and $T_{j, \beta, 8}$}\label{T8}
\setcounter{equation}{0}

$T_{j, \beta, 7}$ is similar to $T_{j, \beta, 8}$. We only give
the details for $T_{j, \beta, 8}$. Recall that 
$T_{j, \beta, 8}=\sum_{m, m'\geq b_\beta} T_{m, m',j, \beta}$,
where 
$$
 T_{m, m', j, \beta}(f, g)(x)=\int\!\int \wh f(\xi)\wh g(\eta)
 e^{i(\xi+\eta)x}\wh\Phi(\frac{\xi}{2^{m+\beta j +j}})
\wh\Phi(\frac{\eta}{2^{m'+\beta j +2j}})m_{j, \beta}(\xi, \eta)d\xi d\eta\,.
$$
Notice that if $|m-m'|\geq 100$ and $m, m'\geq b_\beta$, then when 
$\xi\in{\rm supp}\wh\Phi(\cdot/2^{m+\beta j+j})$,
$\eta\in{\rm supp}\wh\Phi(\cdot/2^{m'+\beta j +2j})$ and 
$t\in{\rm supp}\rho$, 
the phase function 
$$
 \phi_{j, \beta}(t, \xi, \eta) =2^{-j}\xi t +2^{-2j}\eta t^2 -2^{\beta j}|t|^{-\beta}\,
$$
satisfies
\begin{equation}\label{largeD18}
 \big|\phi'_{j, \beta}(t, \xi, \eta)\big|\geq C_\beta \max\{2^{m}, 2^{m'}\}
  2^{\beta j}\,.
\end{equation}
Thus, as usual,  integration by parts then yields 
$$
|m_{j, \beta}(\xi, \eta)|\leq C_{\beta, N} 2^{-N\beta j} \min\{
2^{-Nm}, 2^{-Nm'}\}\,,
$$
Clearly this reduces the problem to the simplest case as we did 
in Section {\ref{product}}. Thus we only need to consider the 
case when $|m-m'|\leq 100$.  The main lemma is the following for this case. 

\begin{lemma}\label{lemT8} 
Let $\beta>1$, $j\geq 0$, $m', m\geq b_\beta$ 
and $|m-m'|\leq 100$. Then there is a positive number $\e_0$ and a constant $C$ such that
\begin{equation}\label{Tmm8}
\big\| T_{m, m', j, \beta}(f, g)\big\|_2\leq C2^{-\e_0 {(j+m)}}\|f\|_\infty\|g\|_2\, 
\end{equation}
holds for all $f\in L^\infty$ and $g\in L^2$. 
\end{lemma}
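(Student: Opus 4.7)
The plan is to adapt the strategy of Section~\ref{T4}, exploiting the fact that because $m,m'\geq b_\beta$ are both large (and $|m-m'|\leq 100$), the rescaled phase function
$$\phi_{8,\xi,\eta}(t)=2^m\xi t+2^{m'}\eta t^2-|t|^{-\beta}$$
has second derivative of size $\sim 2^{m'}$ on the relevant support, as opposed to size $\sim 1$ in Section~\ref{T4}. The extra $2^{-m'/2}\sim 2^{-m/2}$ factor from the improved stationary phase, combined with the oscillatory gain in $j$ used there, should yield a joint decay of $2^{-\epsilon_0(j+m)}$, which is exactly what is needed to sum over $m\geq b_\beta$.

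First I would rescale $(\xi,\eta)\mapsto(2^{m+\beta j+j}\xi,\,2^{m'+\beta j+2j}\eta)$ and then follow the localization-to-trilinear-form reduction of Subsection~6.1 via the partition of unity $\{\Id^*_n\}$, reducing (\ref{Tmm8}) to a bound for a trilinear form $\Lambda_{j,\beta,m,m',8}$ defined analogously to (\ref{defLajb4}), with symbol $m_{8,\beta,j}(\xi,\eta)=\int\rho(t)e^{i2^{\beta j}\phi_{8,\xi,\eta}(t)}\,dt$. In the region where $\phi_{8,\xi,\eta}$ has no critical point on $[1/16,9/16]$, one has $|\phi'_{8,\xi,\eta}|\gtrsim 2^{m'}$ and integration by parts yields $|m_{8,\beta,j}|\leq C_N(2^{\beta j}2^{m'})^{-N}$. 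When a unique critical point $t_0(\xi,\eta)$ exists, stationary phase gives
$$m_{8,\beta,j}(\xi,\eta)\sim\frac{C\,e^{i2^{\beta j}\phi_{8,\xi,\eta}(t_0)}}{(2^{\beta j}2^{m'})^{1/2}},$$
producing the $2^{-(\beta j+m)/2}$ weight. It then suffices to estimate the stripped trilinear form $\tilde\Lambda_{j,\beta,m,m',8}$ (with symbol $e^{i2^{\beta j}\phi_{8,\xi,\eta}(t_0)}$) by $C 2^{-\epsilon_0 j}\|f_1\|_4\|f_2\|_2\|f_3\|_2$, and interpolate with the trivial $L^1\times L^2\times L^2$ bound using $\beta>1$, exactly as in (\ref{4est122})--(\ref{4est422final}).

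The key new ingredient is the Hessian analysis. Setting $\bQ(u,v)=\phi_{8,u,v}(t_0(u,v))$ and $\tilde\bQ_\tau(u,v)=\bQ(u,v)-\bQ(u-\tau,v+b_2\tau)$ for the appropriate $b_2$ arising from the change of coordinates $(u,v)=(\xi-\eta,b_1\xi+b_2\eta)$, Lemma~\ref{lemHessian0} applied with $a=2^m$, $b=2^{m'}$ yields
$$\det H(\tilde\bQ_\tau)=\frac{-4\cdot 2^{2m+2m'}}{\phi''_{8,u,v}(t_0)\,\phi''_{8,u-\tau,v+b_2\tau}(t_0)}\bigl(t_0(u,v)-t_0(u-\tau,v+b_2\tau)\bigr)^2.$$
Mirroring the stability argument of Lemma~\ref{4Hessian2} one verifies that the critical point exists along the entire perturbation path and that $|t_0(u,v)-t_0(u-\tau,v+b_2\tau)|\gtrsim\tau$, so $|\det H(\tilde\bQ_\tau)|\gtrsim 2^{2m}\tau^2$; the analogue of Lemma~\ref{4nondeg} then gives $|\partial^2_{uv}\tilde\bQ_\tau|\gtrsim 2^m\tau$. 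H\"ormander's theorem on non-degenerate phase applied with large parameter $2^{\beta j}$ produces the bilinear $L^2\times L^2$ bound $(2^{\beta j}\cdot 2^m\tau)^{-1/2}$ for the inner oscillatory integral, and integrating in $\tau$ as at the end of Section~\ref{T4} delivers the desired $2^{-\epsilon_0(j+m)}$ gain with $\epsilon_0=(\beta-1)/5$. The main obstacle is the careful bookkeeping of $2^m$ and $2^{m'}$ factors in adapting Lemmas~\ref{4Hessian2} and \ref{4nondeg}: there the exponent $b_\beta$ appeared as an absolute constant, whereas now $2^m$ is the dominant scale in both the mixed-partial lower bound and the critical-point separation, and one must isolate this $2^m$-enhancement everywhere rather than merely establishing positivity.
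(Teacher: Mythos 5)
Your proposal is correct and follows essentially the same route as the paper: reduce to a trilinear form, apply stationary phase, then use the Hessian/non-degeneracy lemmata together with the H\"ormander--Phong--Stein oscillatory estimate and integration in $\tau$, finishing by interpolating with the trivial $L^1\times L^2\times L^2$ bound. The only difference is cosmetic: the paper first reduces to $m=m'$ and renormalizes the phase to $\phi_{8,\xi,\eta}(t)=\xi t+\eta t^2-2^{-m-1}|t|^{-\beta}$ so that $\phi''_{8}\sim 1$ with large parameter $2^{\beta j+m}$, whereas you keep $\phi''_{8}\sim 2^{m'}$ with large parameter $2^{\beta j}$ and track the $2^m$ factors explicitly, which yields the same bounds.
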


Now it is clear that the boundedness of $\sum_j T_{j,\beta, 8}$ follows by Lemma 
{\ref{lemT8}}. Hence we only need to show Lemma {\ref{lemT8}}.\\ 

Since in this case for fixed $m$ there
are only finitely  many $m'$'s, without loss of generality, we 
can assume $m=m'$ when we prove Lemma \ref{lemT8}. By rescaling,
to prove (\ref{lemT8}), it is sufficient to show
\begin{equation}\label{}
 \big\| \ti T_{m, j, \beta}(f, g)\big\|_2\leq C2^{-\e_0 {(j+m)}}\|f\|_\infty\|g\|_2\,
\end{equation}
where
$$
\ti T_{ m, j, \beta}(f, g)(x)=\int\!\int \wh f(\xi)\wh g(\eta)
 e^{i(\xi+\eta)x}\wh\Phi(\frac{\xi}{2^{m+\beta j }})
\wh\Phi(\frac{\eta}{2^{m+\beta j +j}})m_{j, \beta}(2^j\xi, 2^j\eta)d\xi d\eta\,.
$$
By a similar argument as we did in Section \ref{T4}, we can reduce the problem 
to show the $L^2$ boundedness of the following tri-linear form:
$$
\La_{8, m, j, \beta}(f_1, f_2, f_3)
 =\!\!\int\!\!\int\!\! f_1(\xi)f_2(\eta)f_3(\xi+\eta)
\wh\Phi\big(\frac{\xi}{2^{m+1+\beta j}}\big)
\wh\Phi\big(\frac{\eta}{2^{m+1+\beta j +j}}\big)m_{j, \beta}(2^j\xi, 2^j\eta)
d\xi d\eta\,,
$$
i.e. 
\begin{equation}\label{T8222}
\big|\La_{8, m, j, \beta}(f_1, f_2, f_3)\big|
\leq C2^{-\e_0 (j+m)}\|f_1\|_2\|f_2\|_2\|f_3\|_2\,.
\end{equation}

Define $ m_{8, \beta, j}$ to be
$$
 m_{8, \beta, j}(\xi, \eta) =
\int\rho(t) e^{i2^{\beta j+m+1}\phi_{8, \xi, \eta}(t)} dt\,,
$$
where  
$$
\phi_{8,  \xi, \eta}(t)= \xi t + 
\eta t^2 - 2^{-m-1}|t|^{-\beta}\,.
$$

Define the tri-linear form $\La_{j, \beta, 8}$ by
\begin{equation}\label{defLajb8}
\La_{j, \beta, m, 8}(f_1, f_2, f_3)=
\int\!\!\int  f_1(\xi) f_2(\eta) 
f_3\big(2^{-j}\xi+\eta\big)
\wh \Phi(\xi)\wh\Phi(\eta)m_{8, \beta, j}(\xi, \eta)d\xi d\eta\,.
\end{equation}

As the case $T_{j, \beta, 4}$,
by rescaling, to get (\ref{T8222}), it is sufficient to prove 
that there exists a positive number $\e_0$ such that 
\begin{equation}\label{triest8bj}
\big|\La_{j, \beta, m,  8}(f_1, f_2, f_3) \big|\leq 
 C_\beta 2^{-(\beta j+m)/2} 2^{-\e_0 (j+m)} \|f_1\|_2\|f_2\|_2\|f_3\|_2\,
\end{equation}
holds if $ m\geq b_\beta $ and $\beta>1$.\\

\subsection{The Tri-linear Oscillatory Integral}
As before, for simplicity,  we suppose that $\rho$ is supported on $[1/8, 1/2]$. 
We have the following favorable estimate for the phase function $\phi_{8, \xi, \eta}$.
\begin{equation}\label{largeD28}
 \big| \phi''_{8, \xi, \eta}(t)\big| \geq C_\beta\,,
\end{equation}
whenever $m\geq b_\beta$, $\eta$ is in the support of $\wh\Phi$ and 
 $t\in [1/16, 9/16]$  due to the definition of $b_\beta$. 
Thus $\phi'_{8, \xi, \eta}$ is monotone in $[1/16, 9/16]$.  
If in $[1/16, 9/16]$ 
there is no critical point of $\phi_{8, \xi, \eta}$, then 
(\ref{largeD28}) yields that
\begin{equation}\label{largeD18}
 \big| \phi'_{8, \xi, \eta}(t)\big| \geq C_\beta\,
\end{equation}
holds for all $t\in[1/8, 1/2]$. Integration by parts then gives
\begin{equation}\label{smallm8}
 |m_{8, \beta, j}(\xi, \eta)|\leq C_N2^{-N(\beta j+m)}
\end{equation}
for all positive integers $N$, which trivializes (\ref{triest8bj}). \\

The difficult case is when there is a unique critical point of 
$\phi_{8, \xi, \eta}$ in $[1/16, 9/16]$. Let us call this critical point
$t_0=t_0(\xi, \eta)$. 
The method of stationary phase yields that
$$m_{8, \beta, j}(\xi, \eta)
 \sim
 \frac{Ce^{i 2^{\beta j+m+1}\phi_{8, \xi, \eta}(t_0)}}{2^{(\beta j+m)/2}} \,,
$$
since we have
(\ref{largeD28}) and a trivial upper bound when $\xi, \eta$
are in the supports of $\wh\Phi$.
%The method of stationary phase can be found in Stein's book \cite{Stein} and %Sogge's book \cite{sogge}. 
 The high oscillation from the stationary phase should
yield a desired estimate for us. To prove (\ref{triest8bj}), it is
enough to show the following lemma.

\begin{lemma}\label{osc8bj}
Let $ m \geq b_\beta$. And 
let $\ti \La_{j, \beta, m, 8}$ be defined by 
\begin{equation}\label{defLajb80}
\ti \La_{j, \beta, m, 8}(f_1, f_2, f_3)=
\int \!\!\int f_1(\xi) f_2(\eta) f_3\big(2^{-j}\xi+\eta\big)
\wh \Phi(\xi)\wh\Phi(\eta) e^{i 2^{\beta j+m+1}\phi_{8,\xi, \eta}(t_0)}d\xi d\eta\,.
\end{equation}
Suppose that $\beta > 1$. Then there exist a positive number $\e_0$ 
and a constant $C_\beta$ independent of $j$
such that
\begin{equation}\label{triest8bj0}
\big|\ti\La_{j, \beta, m, 8}(f_1, f_2, f_3) \big|\leq 
 C_\beta 2^{-\e_0 (j+m)} \|f_1\|_4\|f_2\|_2\|f_3\|_2\,
\end{equation}
holds for all functions $f_1\in L^4$ and $f_2, f_3\in L^2$. 
\end{lemma}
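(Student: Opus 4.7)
The plan is to follow the strategy used in the proof of Lemma \ref{osc4bj} (Section \ref{T4}), adapting it to the regime $m \geq b_\beta$. First I would change variables $\xi \mapsto \xi - \eta$, $\eta \mapsto b_1\xi + b_2\eta$ with
\[
b_1 = 1 - 2^{-j}, \qquad b_2 = 2^{-j},
\]
chosen so that the argument $2^{-j}\xi + \eta$ of $f_3$ becomes the new $\xi$ variable. Cauchy--Schwarz in $\xi$ then yields $|\ti\La_{j,\beta,m,8}(f_1,f_2,f_3)| \leq \|\bT_{j,\beta,m,8}(f_1,f_2)\|_2 \, \|f_3\|_2$ for a bilinear operator $\bT_{j,\beta,m,8}$ defined exactly in analogy with $\bT_{j,\beta,m,4}$ in Section \ref{T4}.

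Next I would square the bilinear $L^2$ norm, introduce the difference variable $\tau = \eta_2 - \eta_1$, and pass to the coordinates $(u,v) = (\xi-\eta, b_1\xi+b_2\eta)$, reducing the estimate to
\[
\|\bT_{j,\beta,m,8}(f_1,f_2)\|_2^2 = \int \!\!\iint F_\tau(u) G_\tau(v)\, e^{i 2^{\beta j+m+1}\ti\bQ_\tau(u,v)}\, du\, dv\, d\tau,
\]
where $\bQ(u,v) = \phi_{8,u,v}(t_0(u,v))$, $\ti\bQ_\tau(u,v) = \bQ(u,v) - \bQ(u-\tau, v+b_2\tau)$, and $F_\tau$, $G_\tau$ are the correlation functions of $f_1\wh\Phi$ and $f_2\wh\Phi$ exactly as in the proof of Lemma \ref{osc4bj}. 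The key intermediate step is to verify the non-degeneracy of $\ti\bQ_\tau$ with constants uniform in $m \geq b_\beta$: the analogs of Lemmas \ref{lemHessian0}, \ref{4Hessian2}, and \ref{4nondeg} should give $|\det H(\ti\bQ_\tau)| \gtrsim \tau^2$ and $|\p^2 \ti\bQ_\tau / \p u \p v| \gtrsim \tau$. The observation that drives these estimates is
\[
\phi''_{8,\xi,\eta}(t) = 2\eta - 2^{-m-1}\beta(\beta+1)\,|t|^{-\beta-2},
\]
so for $\eta \in {\rm supp}\,\wh\Phi$, $t\in [1/32, 19/32]$, and $m \geq b_\beta$, one has $|\phi''_{8,\xi,\eta}(t)| \sim |\eta| \sim 1$. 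This is a cleaner variant of (\ref{D241}) and lets the arguments of Section \ref{T4} proceed essentially verbatim; the role of the large constant $2^{-b_\beta}$ there is now played by $|\eta|$ itself.

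Then I would apply H\"ormander's theorem on oscillatory integrals with non-degenerate phase to the inner $du\, dv$ integral, producing the pointwise-in-$\tau$ bound
\[
\min\bigl\{1,\, 2^{-(\beta j + m)/2}|\tau|^{-1/2}\bigr\}\, \|F_\tau\|_2\, \|G_\tau\|_2.
\]
Integrating over $\tau \in [-C,C]$, using $\sup_\tau \|F_\tau\|_2 \leq C\|f_1\|_4^2$ together with $\int \|G_\tau\|_2^2\, d\tau \lesssim b_2^{-1}\|f_2\|_2^4 = 2^j\|f_2\|_2^4$ (the $2^j$ coming from the Jacobian of $\tau \mapsto b_2\tau$), and applying Cauchy--Schwarz in $\tau$, one obtains
\[
\|\bT_{j,\beta,m,8}(f_1,f_2)\|_2^2 \leq C_\e\, 2^{-(\beta j + m)(1-\e)/2}\, 2^{j/2}\, \|f_1\|_4^2 \|f_2\|_2^2.
\]
Taking square roots and setting $\e_0 = (\beta-1)/5$ (with $\e$ chosen sufficiently small) then gives (\ref{triest8bj0}), since the $j$-exponent $(\beta(1-\e)-1)/4$ exceeds $\e_0$ precisely because $\beta > 1$, and the $m$-exponent $(1-\e)/4$ exceeds $\e_0$ trivially.

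The main obstacle is the non-degeneracy step. In Section \ref{T4} the parameter $m$ was bounded ($|m|<b_\beta$), so all implicit constants depended only on $\beta$; here one must check that the stability of the critical point $t_0(u,v)$ and the resulting Hessian lower bounds do not degrade as $m \to \infty$. Fortunately the situation is in fact more favorable than in Section \ref{T4}: the singular term $2^{-m-1}|t|^{-\beta}$ in $\phi_{8,\xi,\eta}$ becomes a vanishing perturbation of the parabolic piece $\eta t^2$, so the analysis of the stability of $t_0$ and of the lower bounds on the mixed partials of $\bQ$ should simplify, and the constants should be taken uniformly over $m \geq b_\beta$ with no essential modification of the arguments of Lemmas \ref{4Hessian2} and \ref{4nondeg}.
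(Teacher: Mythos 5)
Your proposal is correct and follows essentially the same line of argument as the paper: the same change of variables with $b_1 = 1-2^{-j}$, $b_2 = 2^{-j}$, the same Cauchy--Schwarz reduction to $\|\bT_{j,\beta,m,8}(f_1,f_2)\|_2$, the same $\tau$-correlation and $(u,v)$-change of coordinates, the same non-degeneracy lemmas (the paper's Lemmas \ref{8Hessian2} and \ref{8nondeg}), the same application of the H\"ormander/Phong--Stein oscillatory integral estimate to get the $\min\{1, 2^{-(\beta j+m)/2}|\tau|^{-1/2}\}$ factor, and the same final Cauchy--Schwarz in $\tau$ paying the $2^{j/2}$ from the Jacobian in $G_\tau$. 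Your explicit observation that $\phi''_{8,\xi,\eta}(t) = 2\eta - 2^{-m-1}\beta(\beta+1)|t|^{-\beta-2}$ behaves uniformly for $m \geq b_\beta$ because the hypersingular term is a vanishing perturbation, so $|\phi''| \sim |\eta| \sim 1$, is exactly the mechanism behind the paper's Lemma \ref{8nondeg} (where the same computation appears, with $|v| \geq C$ on $\operatorname{supp}\wh\Phi$ replacing the role that $2^{-b_\beta}$ played in Lemma \ref{4nondeg}).
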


We now show that (\ref{triest8bj}) is a simple consequence of this Lemma. 
Indeed, 
it is easy to  obtain a trivial estimate by inserting 
absolute values throughout and Cauchy-Schwarz inequality:
\begin{equation}\label{8est122}
 \big|\ti \La_{j, \beta, m, 8}(f_1, f_2, f_3)\big|\leq 
C\|f_1\|_1\|f_2\|_2\|f_3\|_2\,.
\end{equation}
Thus if $\beta >1$, then by an interpolation, (\ref{triest8bj0}) and (\ref{8est122}) yields 
\begin{equation}\label{888est222}
 \big|\ti \La_{j, \beta, m, 8}(f_1, f_2, f_3)\big|\leq 
C 2^{-\e'_0 (j+m)}\|f_1\|_2\|f_2\|_2\|f_3\|_2\,,
\end{equation}
for some $\e'_0>0$, which gives (\ref{triest8bj}) immediately.

\subsection{Lemmas}
As in the case $T_{j, \beta, 4}$, we need the stability of the critical 
points of the phase function.

\begin{lemma}\label{8Hessian2}
Let $m\geq b_\beta$.
And let $t_0$ be a critical point of $\phi_{8, \xi, \eta}$.
Define $\bQ$ by 
\begin{equation}\label{defbQ8}
 \bQ(\xi, \eta) =  \phi_{8, \xi, \eta}(t_0)
\end{equation}
Let $j>0$, $ |\tau | \leq C$,  
$(u, v)\in {\rm supp}{\wh\Phi}\times {\rm supp}{\wh\Phi}$. 
Suppose that
$t_0(u, v), t_0(u-\tau, v+ 2^{-j}\tau)\in [1/16, 9/16]$
exist. 
%\begin{equation}\label{2dcond}
%\min\big\{ \big|\phi''_{2, j, u, v}(t_0(u, v))\big|, 
% \big|\phi''_{2, j, u+\tau, v-2^{-j}\tau }(t_0(u +\tau, 
%v-2^{-j}\tau))\big| \big\}
%\geq C_0\,.
%\end{equation}
And let $\ti\bQ_\tau$
be defined by
$$
 \ti\bQ_{\tau}(u, v) = \bQ(u, v) -\bQ(u-\tau, v+ 2^{-j}\tau)\,.
$$
If $j$ is large enough (larger than a constant), 
then the determinant of the Hessian matrix 
of $\ti\bQ_\tau$ satisfies
\begin{equation}\label{8bigHessian2} 
 \big|{\rm det} H(\ti\bQ_\tau)\big|\geq C\tau^2\,,
\end{equation}
where $H(\ti\bQ_{\tau})$ denotes the Hessian matrix. 
\end{lemma}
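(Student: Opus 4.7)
The plan is to follow, almost verbatim, the strategy of Lemma \ref{4Hessian2}, replacing the role played there by the constant $b_\beta$ with the role played here by the parameter $m$ (this is natural since in Case 8, after rescaling, the ``large'' parameter which makes the pure-quadratic term dominate the oscillatory correction in $\phi''$ is $m$, not $b_\beta$).

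First, I would apply Lemma \ref{lemHessian0} to $\bQ$, noting that the phase $\phi_{8,\xi,\eta}(t)=\xi t + \eta t^2 - 2^{-m-1}|t|^{-\beta}$ fits the template with $a=b=1$ and $f(t)=-2^{-m-1}|t|^{-\beta}$, so that the Hessian of $\bQ$ is everywhere degenerate. As in Lemma \ref{4Hessian2}, this reduces $\det H(\ti\bQ_\tau)$ to the mixed expression
\begin{equation*}
2\bC(u,v)\bC(u-\tau,v+2^{-j}\tau)-\bA(u,v)\bB(u-\tau,v+2^{-j}\tau)-\bA(u-\tau,v+2^{-j}\tau)\bB(u,v),
\end{equation*}
where $\bA,\bB,\bC$ are the second partials of $\bQ$. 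A routine calculation using the analogues of (\ref{tpxi})--(\ref{tpeta}) (namely $\p t_0/\p u = -1/\phi''_{8,u,v}(t_0)$ and $\p t_0/\p v = -2t_0/\phi''_{8,u,v}(t_0)$) then reduces the identity to
\begin{equation*}
\det H(\ti\bQ_\tau) = \frac{-4}{\bD_2(u,v)\,\bD_2(u-\tau,v+2^{-j}\tau)}\bigl(t_0(u,v)-t_0(u-\tau,v+2^{-j}\tau)\bigr)^{2},
\end{equation*}
where $\bD_2(\xi,\eta)=\phi''_{8,\xi,\eta}(t_0(\xi,\eta))$.

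Next, I would control $\bD_2$ uniformly. Since
\begin{equation*}
\phi''_{8,\xi,\eta}(t)=2\eta - 2^{-m-1}\beta(\beta+1)|t|^{-\beta-2},
\end{equation*}
for $\eta$ in a slight enlargement of ${\rm supp}\wh\Phi$ (so $|\eta|\sim 1$) and $m\geq b_\beta$ with $b_\beta$ chosen large, the first term dominates and gives $|\phi''_{8,\xi,\eta}(t)|\sim C_\beta$ uniformly for $t\in[1/32,19/32]$. In particular, both $|\bD_2(u,v)|$ and $|\bD_2(u-\tau,v+2^{-j}\tau)|$ are comparable to $C_\beta$, so the desired estimate (\ref{8bigHessian2}) reduces to
\begin{equation*}
\bigl|t_0(u,v)-t_0(u-\tau,v+2^{-j}\tau)\bigr|\geq C\tau.
\end{equation*}

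Finally, I would prove this lower bound by the triangle inequality, splitting the displacement into a pure $u$-shift and a pure $v$-shift. The mean value theorem together with $|\p t_0/\p u|=1/|\phi''_{8,u,v}(t_0)|\sim C_\beta$ gives $|t_0(u,v)-t_0(u-\tau,v)|\geq C\tau$, while $|\p t_0/\p v|\leq C_\beta$ gives $|t_0(u-\tau,v)-t_0(u-\tau,v+2^{-j}\tau)|\leq C 2^{-j}\tau$, which is negligible for $j$ large. The main obstacle, exactly as in Lemma \ref{4Hessian2}, is the stability of the critical point: one must verify that $t_0(u-\theta\tau,v)$ and $t_0(u-\tau,v+\theta 2^{-j}\tau)$ both lie in $[1/32,19/32]$ for every $\theta\in[0,1]$, so that the mean-value bounds are legitimate. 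This I would handle by the same contradiction argument: since $\p\phi'_{8,\xi,v}/\p\xi = 1 > 0$, the function $\phi'_{8,\cdot,v}(t)$ is strictly monotone in $\xi$, so any failure of existence of an intermediate critical point would force $\phi'_{8,\cdot,v}$ to have a definite sign on $[1/32,19/32]$, contradicting its vanishing at one of the two endpoints $u$ or $u-\tau$. A parallel argument, using the smallness of the perturbation $2^{-j}\tau$ in the $v$-direction together with the uniform bound on $\p\phi'_{8,u,v}/\p v$, handles the $v$-shift. Combining these ingredients yields (\ref{8bigHessian2}).
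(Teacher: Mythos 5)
Your proposal is correct and takes essentially the same approach the paper intends: the paper explicitly omits the proof of Lemma \ref{8Hessian2}, stating only that it is similar to Lemma \ref{4Hessian2}, and you have filled this in by transcribing that argument with the appropriate substitutions (phase $\phi_{8,\xi,\eta}$ with $a=b=1$, the rescaled singular term carrying the factor $2^{-m-1}$ so that $m\geq b_\beta$ makes $2\eta$ dominate $\phi''$, and the shift $v\mapsto v+2^{-j}\tau$ with $\alpha=1$). The reduction to $\bigl|t_0(u,v)-t_0(u-\tau,v+2^{-j}\tau)\bigr|\geq C\tau$, the uniform bound on $\bD_2$, and the stability-of-critical-point argument via monotonicity of $\phi'_{8,\cdot,v}$ in $\xi$ all match the template.
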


We omit the proof of this lemma since it is similar to the proof 
of Lemma \ref{4Hessian2}. Finally we need to verify that 
$\ti\bQ_\tau$ is not degenerate.

\begin{lemma}\label{8nondeg}
Let $\tilde{\bQ}_\tau$ be the function defined as in Lemma \ref{8Hessian2}.
If $j$ is large enough, then
\begin{equation}\label{nondeg8}
 \bigg|\frac{\partial^2 \tilde{\bQ}_\tau}{\partial u\partial v}(u, v)\bigg| \geq C_\beta\tau\,
\end{equation}
\begin{equation}\label{nondeg81}
 \bigg|\frac{\partial^3 \tilde{\bQ}_\tau}{\partial^2 u\partial v}(u, v)\bigg| \geq C_\beta 2^{-m}\tau\,
\end{equation}
hold for all $(u, v)\in {\rm supp}\wh\Phi\times {\rm supp}\wh\Phi$.
\end{lemma}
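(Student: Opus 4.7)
The plan is to mirror the argument of Lemma~\ref{4nondeg}. The key qualitative difference is that here $(u,v)\in {\rm supp}\wh\Phi\times {\rm supp}\wh\Phi$, so $|v|\sim 1$, while the oscillatory tail $-2^{-m-1}|t|^{-\beta}$ in $\phi_{8,u,v}$ is small because $m\ge b_\beta$. Thus $\phi_{8,u,v}$ is a small perturbation of the pure polynomial phase $\phi_{\rm poly}(t)=ut+vt^2$, whose critical value is the explicit function $\bQ_{\rm poly}(u,v)=-u^2/(4v)$.

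For (\ref{nondeg8}), I would decompose
$$
\frac{\p^2 \tilde{\bQ}_\tau}{\p u\p v}(u,v)
= \bigl[\tfrac{\p^2\bQ}{\p u\p v}(u,v)-\tfrac{\p^2\bQ}{\p u\p v}(u-\tau,v)\bigr]
+\bigl[\tfrac{\p^2\bQ}{\p u\p v}(u-\tau,v)-\tfrac{\p^2\bQ}{\p u\p v}(u-\tau,v+2^{-j}\tau)\bigr].
$$
Using $\p_u\bQ=t_0$, $\p_v\bQ=t_0^2$ and implicit differentiation of the critical-point equation $u+2vt_0+\beta 2^{-m-1}t_0^{-\beta-1}=0$, a computation parallel to (\ref{4d2udv}) yields
$$
\frac{\p^3\bQ}{\p^2 u\,\p v}(u,v)
=\frac{2\bigl(2v-\beta(\beta+1)(\beta+3)\,2^{-m-1}t_0^{-\beta-2}\bigr)}{\bigl(\phi''_{8,u,v}(t_0)\bigr)^3}.
$$
Because $|v|\sim 1$ dominates the $2^{-m}$-small correction, both the numerator and the denominator $\phi''_{8,u,v}(t_0)\sim 2v$ are of size $\sim 1$, so $|\frac{\p^3\bQ}{\p^2 u\,\p v}|\ge C_\beta$. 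The mean value theorem bounds the first bracket below by $C_\beta\tau$. A similar direct calculation shows $|\frac{\p^3\bQ}{\p u\,\p^2 v}|\le C_\beta$, so the second bracket is $O(2^{-j}\tau)$, and for $j$ large enough (\ref{nondeg8}) follows.

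For (\ref{nondeg81}) the same splitting reduces matters to bounding $|\frac{\p^4\bQ}{\p^3 u\,\p v}|$ from below. The crucial observation is that for the polynomial model $\bQ_{\rm poly}=-u^2/(4v)$ one has $\frac{\p^3\bQ_{\rm poly}}{\p^2 u\,\p v}=\frac{1}{2v^2}$ and $\frac{\p^4\bQ_{\rm poly}}{\p^3 u\,\p v}=0$, so every nonzero contribution to $\frac{\p^4\bQ}{\p^3 u\,\p v}$ is forced to come from the perturbation $-2^{-m-1}|t|^{-\beta}$, producing an explicit factor of $2^{-m}$. Differentiating the formula above once more in $u$ and collecting terms, the leading contribution in $2^{-m}$ turns out to be
$$
\frac{\p^4\bQ}{\p^3 u\,\p v}(u,v)
=-\frac{2\beta^2(\beta+1)(\beta+2)\,2^{-m-1}\,t_0^{-\beta-3}}{(2v)^4}+O(2^{-2m}),
$$
with a nonzero leading coefficient since $\beta>0$. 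Hence $|\frac{\p^4\bQ}{\p^3 u\,\p v}|\ge C_\beta 2^{-m}$ once $m$ is large enough, and the mean value theorem gives the first bracket $\ge C_\beta 2^{-m}\tau$. The second bracket is controlled by $|\frac{\p^4\bQ}{\p^2 u\,\p^2 v}|\cdot 2^{-j}\tau$; a parallel calculation using $\frac{\p^4\bQ_{\rm poly}}{\p^2 u\,\p^2 v}=-1/v^3$ shows this is $O(2^{-j}\tau)$, negligible against $2^{-m}\tau$ whenever $j$ is large relative to $m$ (which is the regime in which (\ref{nondeg81}) enters the subsequent H\"ormander argument).

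The main obstacle is the cancellation step in the fourth derivative. Two separate contributions of size $2^{-m-1}$ appear in $\frac{\p^4\bQ}{\p^3 u\,\p v}$: one from differentiating $t_0^{-\beta-2}$ in the numerator, one from differentiating $D=\phi''_{8,u,v}(t_0)$ in the denominator. A priori these could cancel to force the derivative to be of order $2^{-2m}$, which would violate (\ref{nondeg81}). Carrying out the algebra carefully and verifying that the residual coefficient $-2\beta^2(\beta+1)(\beta+2)\neq 0$ is the heart of the proof; once this nonvanishing is established, the rest of the argument parallels Lemma~\ref{4nondeg} verbatim.
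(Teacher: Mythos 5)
Your treatment of (\ref{nondeg8}) reproduces the paper's proof essentially verbatim: the same splitting of $\p^2_{uv}\tilde{\bQ}_\tau$ into a $u$-difference and a $v$-difference, the same implicit differentiation giving $\p^3_{uuv}\bQ = \dfrac{2\bigl(2v-\beta(\beta+1)(\beta+3)2^{-m-1}t_0^{-\beta-2}\bigr)}{(\phi''_{8,u,v}(t_0))^3}$ (equivalent to the paper's \eqref{8d2udv}), and the observation that $|v|\sim 1$ dominates the $O(2^{-m})$ correction so the numerator and denominator are $\sim 1$.

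For (\ref{nondeg81}) the paper simply says ``can be proved similarly,'' so here you are supplying details that the paper omits. Your algebra is correct: differentiating $\p^3_{uuv}\bQ$ once more in $u$ and using $\p_u t_0 = -1/\phi''$ does yield, to leading order in $2^{-m}$,
\begin{equation*}
\frac{\p^4\bQ}{\p u^3\p v} = -\frac{2\beta^2(\beta+1)(\beta+2)\,2^{-m-1}\,t_0^{-\beta-3}}{(2v)^4}+O(2^{-2m}),
\end{equation*}
and the insight that for the unperturbed polynomial phase ($\bQ_{\rm poly}=-u^2/(4v)$) this derivative vanishes identically, so the $2^{-m}$-perturbation is the \emph{only} source of the lower bound, is exactly the right way to see why the factor $2^{-m}$ in (\ref{nondeg81}) appears. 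Verifying the nonvanishing of the coefficient $-2\beta^2(\beta+1)(\beta+2)$ is indeed the substantive point.

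The one place where your argument is not fully closed is the competition between the two brackets in the triangle inequality for (\ref{nondeg81}). You note that the $v$-difference bracket is $O(2^{-j}\tau)$ (because $\p^4_{uuvv}\bQ_{\rm poly}=-1/v^3$ is $O(1)$, with no $2^{-m}$ gain) and that this is only negligible compared to $2^{-m}\tau$ when $j$ is large relative to $m$. That observation is correct and is a real subtlety: for the paper's statement ``if $j$ is large enough'' to be read uniformly in $m\geq b_\beta$, one must deal with the regime $j\lesssim m$, where the $O(2^{-j}\tau)$ term from the polynomial part dominates and, for $v>0$, has sign \emph{opposite} to the $O(2^{-m}\tau)$ contribution from the perturbation (the polynomial part contributes $2^{-j}\tau/v^3$ while the perturbation contributes a negative multiple of $2^{-m}\tau$), so a pure triangle-inequality bound does not rule out near-cancellation when $2^{-j}\sim 2^{-m}$. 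You wave this away with the parenthetical claim that $j\gg m$ is ``the regime in which (\ref{nondeg81}) enters the subsequent H\"ormander argument,'' but nothing in the paper's proof of Lemma~\ref{osc8bj} restricts to $j\gg m$, so this assertion is unsubstantiated. To be fair, the paper itself omits the proof of (\ref{nondeg81}) entirely, so this is a gap the paper glosses over rather than one you introduced; but if you want a complete argument you should either exhibit a sign or size argument that makes the two brackets add rather than cancel when $j\lesssim m$ (for instance by observing that when $2^{-j}\gtrsim 2^{-m}$ the $v$-difference bracket alone already gives $|\p^3_{uuv}\tilde{\bQ}_\tau|\gtrsim 2^{-j}\tau\geq 2^{-m}\tau$, provided you rule out cancellation against the $u$-bracket), or else reformulate the lemma with ``$j$ large enough depending on $m$'' and then verify that this suffices for Lemma~\ref{osc8bj}.
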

\begin{proof}
Clearly 
$$
\frac{\partial^2 \tilde{\bQ}_\tau}{\partial u\partial v}(u, v)
=\frac{\partial^2 {\bQ}}{\partial u\partial v}(u, v)
-\frac{\partial^2 {\bQ}}{\partial u\partial v}(u-\tau, v+\alpha 2^{-j}\tau)\,.
$$
We can estimate $|\frac{\partial^2 \tilde{\bQ}_\tau}{\partial u\partial v} |$ by
$$
\bigg|\frac{\partial^2 {\bQ}}{\partial u\partial v}(u, v) -
\frac{\partial^2 {\bQ}}{\partial u\partial v}(u-\tau, v)\bigg|
+\bigg|\frac{\partial^2 {\bQ}}{\partial u\partial v}(u-\tau, v)-
\frac{\partial^2 {\bQ}}{\partial u\partial v}(u-\tau, v+\alpha 2^{-j}\tau)\bigg|\,.
$$
By the mean value theorem, the first term in the previous sum is majorized by
$$
\bigg|\frac{\partial^3 {\bQ}}{\partial^2 u\partial v}(u-\theta\tau, v)\bigg|\tau
$$
for some $\theta\in [0,1]$.
From the proof of Lemma \ref{lemHessian0}, we have 
\begin{equation}\label{8dudv}
\frac{\partial^2 {\bQ}}{\partial u\partial v}(u,v)=
\frac{- 2 t_0(u, v)}{\phi''_{8,u,v}(t_0)}\,.
\end{equation} 
Thus 
\begin{equation}\label{8d2udv}
\frac{\partial^2 {\bQ}}{\partial^2 u\partial v}(u,v)=
\frac{- 2
 \frac{\partial t_0}{\partial u}(u, v) \big(\phi''_{8, u, v}(t_0)
 - \beta(\beta+1)(\beta+2)2^{-m-1}|t_0|^{-\beta-2} \big)
}{\big(\phi''_{8,u,v}(t_0)\big)^2}\,,
\end{equation}
which is equal to 
$$
 \frac{ - 2
 \frac{\partial t_0}{\partial u}(u, v) \big(
 2 v -
 {\ti C}_\beta 2^{-m-1} |t_0|^{-\beta-2} \big)
}{\big(\phi''_{8,u,v}(t_0)\big)^2}\,,
$$
where ${\ti C}_\beta $ is a number such that $|{\ti C}_\beta|\sim \beta(\beta+1)^2$.
Since $m \geq b_\beta$ is a large number and $|v|\geq C$ 
whenever $v\in {\rm supp}\wh\Phi$, we have 
 \begin{equation}\label{8d2udv1}
\bigg|\frac{\partial^2 {\bQ}}{\partial^2 u\partial v}(u,v)\bigg|\geq 
 C_\beta   \,
\end{equation}
for all $(u, v)\in {\rm supp}\wh\Phi\times {\rm supp}\wh\Phi  $.
Thus we obtain 
\begin{equation}\label{e1stterm8}
\bigg|\frac{\partial^2 {\bQ}}{\partial u\partial v}(u, v) -
\frac{\partial^2 {\bQ}}{\partial u\partial v}(u-\tau, v)\bigg|
\geq C_\beta \tau\,,
\end{equation}
for all all $(u, v)\in {\rm supp}\wh\Phi\times {\rm supp}\wh\Phi $.\\

Notice that $\frac{\partial^2 {\bQ}}{\partial u\partial^2 v}(u,v)$ equals to 
$$
 \frac{- 2  \big(
 \frac{\partial t_0}{\partial v}(u, v)\phi''_{8, u, v}(t_0)
 +\frac{\partial t_0}{\partial v}(u, v)\beta(\beta+1)(\beta+2)2^{-m-1}|t_0|^{-\beta-2} 
- 2\big)
}{\big(\phi''_{8,u,v}(t_0)\big)^2}\,,
$$
which is clearly bounded by $C_\beta$.
The mean value theorem then yields 
\begin{equation}\label{e2ndterm8}
\bigg|\frac{\partial^2 {\bQ}}{\partial u\partial v}(u-\tau, v)-
\frac{\partial^2 {\bQ}}{\partial u\partial v}(u-\tau, v+\alpha 2^{-j}\tau)\bigg|
\leq C_\beta 2^{-j}\tau\,.
\end{equation}
From (\ref{e1stterm8}) and (\ref{e2ndterm8}), we have (\ref{nondeg8}) if $j$ is large enough.
(\ref{nondeg81}) can be proved similarly. We omit the details. 

\end{proof}

\subsection{Proof of Lemma \ref{osc8bj}}
We now prove Lemma \ref{osc8bj}.  It is quite similar to the proof
of Lemma {\ref{osc4bj}}. 
Let $b_1=1-2^{-j}$ and $b_2=2^{-j}$. 
Changing variable 
$\xi\mapsto \xi-\eta$ and $\eta\mapsto b_1\xi+b_2\eta$, 
we have that $ \ti\La_{j, \beta, m, 8}(f_1,f_2, f_3) $
equals to
$$
\int\!\!\!\int\!\! f_1(\xi-\eta) f_2(b_1\xi+b_2\eta) f_3(\xi)
\wh\Phi(\xi-\eta)\wh\Phi(b_1\xi+b_2\eta) e^{i2^{\beta j+m+1}
\phi_{8, \xi-\eta, b_1\xi+b_2\eta }(t_0)} d\xi d\eta.
$$
Thus by Cauchy-Schwarz we dominate $|\ti\La_{j, \beta, m, 8}|$
by
$$
 \big\| \bT_{j, \beta, m, 8}(f_1, f_2)\big\|_2\|f_3\|_2\,,
$$
where $\bT_{j, \beta, m, 8}$ is defined by
$$
\bT_{j, \beta, m, 8}(f_1, f_2)(\xi)
= \int\!\! f_1(\xi-\eta) f_2(b_1\xi+b_2\eta) 
\wh\Phi(\xi-\eta)\wh\Phi(b_1\xi+b_2\eta) e^{i2^{\beta j+m+1}
\phi_{8, \xi-\eta, b_1\xi+b_2\eta }(t_0)} d\eta.
$$ 
It is easy to see 
 $\big\| \bT_{j, \beta, m, 8}(f_1, f_2)\big\|_2^2$ equals to
$$
\int\bigg(\int\!\int F(\xi, \eta_1, \eta_2) G(\xi, \eta_1, \eta_2)
e^{ i 2^{\beta j+m+1} \big(\phi_{8,\xi-\eta_1, b_1\xi+b_2\eta_1 }(t_0)-
\phi_{8, \xi-\eta_2, b_1\xi+b_2\eta_2 }(t_0) \big)}
  d\eta_1d\eta_2 \bigg)  d\xi\,, 
$$
where  
$$
F(\xi, \eta_1, \eta_2)= \big(f_1\wh\Phi\big)(\xi-\eta_1 )
\overline{\big(f_1\wh\Phi\big)(\xi-\eta_2)  }
$$
$$
G(\xi, \eta_1,\eta_2)=\big(f_2\wh\Phi\big)(b_1\xi+b_2\eta_1 )
 \overline{\big(f_2\wh\Phi\big)(b_1\xi+b_2\eta_2)}\,.
$$
Changing variables $\eta_1\mapsto \eta$ and $\eta_2\mapsto \eta+\tau$, 
we see that $\big\|\bT_{j, \beta, m, 8}(f_1, f_2) \big\|_2^2$ equals to
$$
\int\!\bigg(\int\!\!\int F_\tau(\xi-\eta) G_{\tau}(b_1\xi+b_2\eta)
 e^{i 2^{\beta j+m+1}\big(\phi_{8,\xi-\eta, b_1\xi+b_2\eta }(t_0)-
\phi_{8, \xi-\eta-\tau, b_1\xi+b_2(\eta+\tau) }(t_0)
\big)}  d\xi d\eta                             \bigg)d\tau\,, 
$$
where 
$$
F_\tau(\cdot) = \big(f_1\wh\Phi\big)(\cdot )
\overline{\big(f_1\wh\Phi\big)(\cdot -\tau )  }
$$
$$
G_\tau(\cdot) = \big(f_2\wh\Phi\big)(\cdot )
\overline{\big(f_2\wh\Phi\big)(\cdot + b_2\tau )  }\,.
$$
Changing coordinates to $(u,v)=(\xi-\eta, b_1\xi+b_2\eta)$, 
the inner integral becomes
$$
\int\!\int F_\tau(u)G_\tau(v) e^{i2^{\beta j+m+1}
 \ti\bQ_{\tau,m} (u, v)  } du dv\,,
$$
where $\ti\bQ_{\tau,m}$ is defined by 
$$
\ti\bQ_{\tau,m}(u, v)= \bQ(u, v)-\bQ(u-\tau, v+b_2\tau)\,,
$$
and here $\bQ_{u, v}=\phi_{8, u, v}(t_0(u, v))$.

By Lemma \ref{nondeg8} and a theorem of Phong and Stein \cite{PSt1},
we dominate $\big\| \bT_{j, \beta, m, 8}(f_1, f_2)\big\|_2^2$  by 
$$
C_\e
\int_{-10}^{10} \min\big\{1, 2^{-(\beta j+m+1)/2}\tau^{-1/2} \big\} 
\big\|F_\tau\big\|_2 \big\|G_\tau \big\|_2   d\tau\,.
$$
By Cauchy-Schwarz inequality it is bounded by
$$
 C_\e 2^{-(\beta j+m)(1-\e)/2} 2^{j/2}\|f_1\|_4^2\|f_2\|_2^2\,,
$$
for any $\e>0$.  Thus we have 
\begin{equation}\label{8est422}
 \big|\ti \La_{j, \beta, m, 8}(f_1, f_2, f_3)\big|\leq 
C_\e 2^{-(\beta j+m)(1-\e)/4} 2^{j/4}\|f_1\|_4\|f_2\|_2\|f_3\|_2\,.
\end{equation}
Choose $\e_0$ to be $(\beta-1)/5$. We get
\begin{equation}\label{8est422final}
 \big|\ti \La_{j, \beta, m, 8}(f_1, f_2, f_3)\big|\leq 
C_\beta 2^{-\e_0(\beta j+m)} \|f_1\|_4\|f_2\|_2\|f_3\|_2\,.
\end{equation}

 %It is also easy to see the trivial estimate
%\begin{equation}\label{8est122}
 %\big|\ti \La_{j, \beta, m, 8}(f_1, f_2, f_3)\big|\leq 
%C\|f_1\|_1\|f_2\|_2\|f_3\|_2\,.
%\end{equation}
%Thus if $\beta >1$, then by interpolation we have 
%\begin{equation}\label{888est222}
 %\big|\ti \La_{j, \beta, m, 8}(f_1, f_2, f_3)\big|\leq 
%C 2^{-\e_0 (j+m)}\|f_1\|_2\|f_2\|_2\|f_3\|_2\,,
%\end{equation}
%for some $\e_0>0$. 
We thus complete the proof of Lemma {\ref{osc8bj}},
and therefore the proof for the case $T_{j,\beta, 8}$.


\begin{thebibliography}{4}

%\bibitem{calderon} A. Calder\'on, 
%{Commutators of singular integral operators}, 
%Proc. Natl. Acad. Sci. USA {\bf 53} (1977), 1092--1099. 

%\bibitem{Christ}  M. Christ, 
%{\it Hilbert transforms along curves, II: A flat case },
%Duke Math. J.{\bf 52} (1985), 887--894. 

%\bibitem{DRubio} J. Duoandikoetxea and J. L. Rubio de Francia, 
%{Maximal and singular integral operators via Fourier transform estimates}, 
%Invent. Math.  {\bf 84} (1986), 541--561. 


%\bibitem{NVWW} A. Nagel, I. Vance, S. Wainger, and D. Weinberg 
%{\it  Hilbert transforms for convex curves}, 
%Duke. Math. J. {\bf 50} (1983), 735--744.
\bibitem{CCW} A. Carbery, M. Christ, and J. Wright,
{\it  Multidimensional van der Corput and sublevel set estimates},  
 J. Amer. Math. Soc. , {\bf 12} (1999), 981--1015.

\bibitem{C} S. Chandarana, {\it $L^{p}$ bounds for hypersingular integral
operators along curves}, Pacific Jour. Math. {\bf 175}, (1996), 389-416.

\bibitem{CFWZ} J. Chen, D. Fan, M. Wang and X. Zhu, {\it $L^{p}$ bounds for hyper
Hilbert transform along curves}, to appear in Proc. Amer. Math Soc.

\bibitem{Christ1}  M. Christ,
{\it Hilbert transforms along curves, I: Nilpotent groups }, Ann. Math.
 {\bf 122} (1985), 575-596.

\bibitem{Christ2}  M. Christ,
{\it Hilbert transforms along curves, II: A flat case }, Duke Math.
J. {\bf 52}  (1985), 887-894.


\bibitem{CLTT} M. Christ, X. Li, T. Tao and C. Thiele,
{\it  On multilinear oscillatory integrals, singular and nonsingular},  
 Duke Math. J., {\bf 130}, No. 2 (2005), 321--351.

\bibitem{CNSW} M. Christ, A. Nagel, E. Stein and S. Wainger, {\it Singular and maximal
Radon transforms: analysis and geometry}, Ann of Math. 150 (1999), 489-577.

\bibitem{CM1} R. R. Coifman and Y. Meyer,
{\it Commutateurs $d'$ int\'egrales singuli\`eres at op\'erateuers 
multilin\'eaires},  
 Ann. Inst. Fourier, Greenoble {\bf 28}, (1978), 177--202.

\bibitem{CM2}  R. R. Coifman and Y. Meyer,
{\it Au-del\`a   des op\'erateurs pseudo-diff\'erentiels},  
Asterisque {\bf 57}, (1978).

\bibitem{DRubio} J. Duoandikoetxea and J. L. Rubio de Francia,
{Maximal and singular integral operators via Fourier transform
estimates}, Invent. Math.  {\bf 84} (1986), 541-561.

\bibitem{FR} E.B. Fabes and N.M. Riviere, {\it Singular integral with mixed homogeneity},
Studia Math. {\bf 27} (1966), 19-38.

\bibitem{FS} C. Fefferman and E. Stein, {\it $H^{p}$spaces of several variables}, Acta
Math., {\bf 229} (1972), 137-193.

\bibitem{Fur} H. Furstenberg, {\it Nonconventional ergodic averages}, 
Proceedings of Symposia in Pure Math., {\bf 50} (1990), 43--56

\bibitem{GL1} L. Grafakos and X. Li, {\it The disc as a bilinear multiplier},
American J. of Math., {\bf 128} (2006), 91--119.

%\bibitem{GT} L. Grafakos and R. Torres,
%{\it Multilinear Calder\'on-Zygmund theory},  
% Adv. in Math.  {\bf 165} (2002), no. 1, 124--164. 

\bibitem{H} I. Hirschman Jr., {\it On multiplier transformations}, Duke Math. Journal,
{\bf 26} (1959), 221--242.

\bibitem{Hor} L. H\"ormander,  {\it Oscillatory integrals and multipliers on $FL^p$ },  Ark. Mat., {\bf 11} (1973),
1--11.

\bibitem{HK}B. Host and B. Kra, {\it
Nonconventional ergodic averages and nilmanifolds},  Ann. of Math. (2) 161 (2005),
no. 1, 397--488.


%\bibitem{KS}  C. Kenig and E. M. Stein,
%{\it Multilinear estimates and fractional integration},  
% Math. Res. Let. {\bf 6} (1999), 1--15.

\bibitem{LT} M. Lacey and C. Thiele, {\it $L^p$ estimates for the bilinear Hilbert
tansform for $2< p <\infty$}, Ann of Math. (2) {\bf 146} (1997), 693--724. 

\bibitem{NVWW} A. Nagel, I. Vance, S. Wainger, and D. Weinberg,
{\it  Hilbert transforms for convex curves}, Duke. Math. J. {\bf 50}
(1983), 735-744.

\bibitem{PSt1} D. H. Phong and E. M. Stein, 
{\it On a stopping process for oscillatory integrals  }, 
J. Geom. Anal. {\bf 4} (1994), 104--120.


\bibitem{sogge} C. D. Sogge, {\it Fourier integrals in classical analysis},
Cambridge University Press, 1993.

\bibitem{Stein} E. Stein,
{\it Harmonic analysis, real-variable methods, orthogonality, and oscillatory integrals},
Princeton, (1993).

\bibitem{Stein1} E. Stein,
{\it  Oscillatory integrals related to Radon-like transforms},
J. of Fourier Analysis and Appl., Kahane special issue, (1995), 535--551.

\bibitem{SW} E. Stein and S. Wainger,
{\it Problems in harmonic analysis related to curvature}, Bull. Amer. Math. Soc.
{\bf 84} (1978), 1239-1295.


%\bibitem{Stein} E. Stein,
%{\it  Oscillatory integrals related to Radon-like transforms},  
%J. of Fourier Analysis and Appl., Kahane special issue, (1995), 535--551.

\bibitem{T} C. Thiele, {\it Singular integrals meet modulation invariance
survey article}, Proceedings of ICM 2002, Higher Education Press, Beijing,
2002, Vol. II,  721-732.


\bibitem{W} S. Wainger, {\it Special trigonometric series in k dimensions}, 
Memoirs Amer. Math Soc.,59 (1965), AMS.

\bibitem{Z} M. Zielinski, {\it Highly oscillatory integrals along curves}, Ph.D. Thesis,
University of Wisconsin-Madison, 1985.



\end{thebibliography}
\end{document}